\newcommand{\om}{\omega}
\newcommand{\ds}{\displaystyle}
\newcommand{\La}{\mathcal{L}}
\journalname{}
\begin{document}

\title{
Stability properties of a two-dimensional system involving one Caputo derivative and applications to the investigation of a fractional-order Morris-Lecar neuronal model
\thanks{This work was supported by a grant of the Romanian National Authority for Scientific Research and Innovation, CNCS-UEFISCDI, project no.
PN-II-RU-TE-2014-4-0270.}}

\titlerunning{Stability of a two-dimensional system with one Caputo derivative and applications to a fractional-order Morris-Lecar neuronal model}        

\author{Oana Brandibur \and Eva Kaslik}

\institute{Oana Brandibur and Eva Kaslik \at Institute e-Austria Timisoara\\ Bd. V. Parvan nr. 4, cam. 045B, 300223, Romania \and West University of Timi\c{s}oara\\ Bd. V. Parvan nr. 4, 300223 Romania\\
\email{ekaslik@gmail.com}
}

\date{Received: date / Accepted: date}

\maketitle

\noindent{\textbf{PUBLICATION DETAILS:\\
This paper is now published (in revised form) in\\ Nonlinear Dynamics, 90(4): 2371--2386, 2017.\\
The final publication is available at Springer via \\
http://dx.doi.org/10.1007/s11071-017-3809-2}}

\medskip
\begin{abstract}

Necessary and sufficient conditions are given for the asymptotic stability and instability of a two-dimensional incommensurate order autonomous linear system, which consists of a differential equation with a Caputo-type fractional order derivative and a classical first order differential equation. These conditions are expressed in terms of the elements of the system's matrix, as well as of the fractional order of the Caputo derivative. In this setting, we obtain a generalization of the well known Routh-Hurwitz conditions. These theoretical results are then applied to the analysis of a two-dimensional fractional-order Morris-Lecar neuronal model, focusing on stability and instability properties. This fractional order model is built up taking into account the dimensional consistency of the resulting system of differential equations. The occurrence of Hopf bifurcations is also discussed. Numerical simulations exemplify the theoretical results, revealing rich spiking behavior. The obtained results are also compared to similar ones obtained for the classical integer-order Morris-Lecar neuronal model.

\keywords{Caputo derivative \and Morris-Lecar\and mathematical model \and fractional order derivative \and stability \and instability \and bifurcation \and numerical simulation}
\end{abstract}

\section{Introduction}

In many real world applications, generalizations of dynamical systems using
fractional-order differential equations instead of classical integer-order differential equations have
proven to be more accurate, as fractional-order derivatives provide a good tool for the
description of memory and hereditary properties. Phenomenological description of colored noise \cite{Cottone}, electromagnetic
waves \cite{Engheia}, diffusion and wave propagation \cite{Henry_Wearne,Metzler}, viscoelastic liquids \cite{Heymans_Bauwens}, fractional kinetics \cite{Mainardi_1996} and hereditary effects in nonlinear acoustic waves \cite{Sugimoto} are just a few areas where fractional-order derivatives have been successfully applied.

In addition to straightforward similarities that can be drawn between fractional- and integer-order derivatives and corresponding dynamical systems, it is important to realize that qualitative differences may also arise. For instance, the fractional-order derivative of a non-constant periodic function cannot be a periodic function of the same period \cite{Kaslik2012non}, which is in contrast with the integer-order case. As a consequence, periodic solutions do not exist in a wide class of fractional-order systems. Due to these qualitative differences, which cannot be addressed by simple generalizations of the properties that are available in the integer-order case, the theory of fractional-order systems is a very promising field of research.

With a multitude of practical applications, stability analysis is one of the most important research topics of the qualitative theory of fractional-order systems. Comprehensive surveys of stability properties of fractional differential equations and fractional-order systems have been recently published in \cite{Li-survey,Rivero2013stability}. When it comes to the stability of linear autonomous commensurate fractional order systems, the most important starting point is Matignon's theorem \cite{Matignon}, which has been recently generalized in \cite{Sabatier2012stability}. Linearization theorems (or analogues of the classical Hartman-Grobman theorem) for fractional-order systems have been proved in \cite{Li_Ma_2013,Wang2016stability}. Incommensurate order systems have not received as much attention as their commensurate order counterparts.  Some stability results for linear incommensurate fractional order systems with rational orders have been obtained in  \cite{Petras2008stability}. Oscillations in two-dimensional incommensurate fractional order systems have been investigated in \cite{Datsko2012complex,Radwan2008fractional}.  BIBO stability in systems with irrational transfer functions has been recently investigated in \cite{Trachtler2016}.

In the first of this paper, our aim is to explore necessary and sufficient conditions for the asymptotic stability and instability of a two-dimensional linear incommensurate fractional order system, which consists of a differential equation with a Caputo-type fractional order derivative and a classical first order differential equation.

In the second part of this paper, we propose and analyze a two-dimensional fractional-order Morris-Lecar neuronal model, by replacing the integer-order derivative from the equation describing the dynamics of the membrane potential by a Caputo fractional-order derivative, with careful treatment of the dimensional consistency problem of the resulting system. This fractional-order formulation is justified by experimental results concerning biological neurons \cite{Anastasio}. In \cite{Lundstrom}, it has been underlined that "fractional differentiation provides neurons with a fundamental and general computation ability that can contribute to efficient information processing, stimulus anticipation and frequency-independent phase shifts of oscillatory neuronal firing", emphasizing the importance of developing and analyzing fractional-order models of neuronal activity.

\section{Preliminaries}

\subsection{Fractional-order derivatives}
The Gr\"{u}nwald-Letnikov derivative, the Riemann-Liouville derivative and the Caputo derivative are the most widely used types of fractional derivatives, which are generally not equivalent. In this paper, we restrict our attention to the Caputo derivative, as it is more applicable to real world problems, given that it only requires initial conditions expressed in terms of integer-order derivatives, which represent well-understood features of physical situations. We refer to \cite{Diethelm_book,Kilbas,Lak,Podlubny} for an introduction to fractional calculus and the qualitative analysis of fractional-order dynamical systems.

\begin{definition}
The Caputo fractional-order derivative of an absolutely continuous function $f$ on a real interval $[a,b]$ is
\begin{equation*}
^cD^q f(x)=\frac{1}{\Gamma(1-q)}\int_ 0^x(x-t)^{-q}f^{\prime }(t)dt~,
\end{equation*}
where the gamma function is defined, as usual, as:
\begin{equation*}
\Gamma(z)=\int_0^\infty e^{-t}t^{z-1}dt~.
\end{equation*}
\end{definition}

\begin{remark}
The Caputo derivative of a function $f$ can be expressed as
$$
^c\!D^qf(x)=(k\ast f')(x),
$$
where $k(x)=\frac{x^{-q}}{\Gamma(1-q)}$ and $\ast$ denotes the convolution operation. The Laplace transform of the function $k(x)$ is
$$\mathcal{L}(k)(s)=s^{q-1},$$
where, according to \cite{Doetsch} (example 8 on page 8), the principal value (first branch) of the complex power function has to be taken into account. Therefore, the Laplace transform of the Caputo derivative is deduced in the following way:
\begin{align*}
\mathcal{L}(^c\!D^q f)(s)&=\mathcal{L}(k\ast f')(s)=\mathcal{L}(k)(s)\cdot \mathcal{L}(f')(s)=\\
&=s^{q-1}(s\mathcal{L}(f)(s)-f(0))=\\
&=s^{q}\mathcal{L}(f)(s)-s^{q-1}f(0).
\end{align*}
\end{remark}

In the following, we give an elementary result that will be useful in the theoretical analysis of the Morris-Lecar neuronal model. For completeness, the proof is included in Appendix A.

\begin{proposition}\label{prop_aq}
	Let $f$ and $g$ be two functions such that $g(x)=f(ax)$, with $a\neq 0$.
	Then
	$$^c\!D^qg(x)=a^q\cdot ^c\!\!D^q f(ax)$$
\end{proposition}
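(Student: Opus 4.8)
The plan is to compute ${}^c\!D^q g$ directly from the integral definition and reduce it to ${}^c\!D^q f(ax)$ by a single change of variables. First I would apply the chain rule to obtain $g'(t)=a\,f'(at)$, which is justified almost everywhere since $f$ (and hence $g$) is assumed absolutely continuous. Substituting this into the defining integral gives
$$
{}^c\!D^q g(x)=\frac{a}{\Gamma(1-q)}\int_0^x (x-t)^{-q}\,f'(at)\,dt.
$$

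Next I would perform the substitution $u=at$, so that $dt=du/a$ and the limits $t=0,x$ become $u=0,ax$. The factor $a$ coming from the chain rule cancels the Jacobian $1/a$, and the kernel rewrites as $(x-u/a)^{-q}=\bigl((ax-u)/a\bigr)^{-q}=a^{q}(ax-u)^{-q}$. After these manipulations the integral becomes
$$
\frac{a^{q}}{\Gamma(1-q)}\int_0^{ax}(ax-u)^{-q}\,f'(u)\,du,
$$
which is exactly $a^{q}\cdot{}^c\!D^q f(ax)$ by the definition evaluated at the point $ax$, completing the argument.

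The main obstacle is the sign of $a$. For $a>0$ everything above is elementary real analysis. For $a<0$ the substitution sends $[0,x]$ onto the oriented interval with endpoint $ax$ of opposite sign, and the factorization $(x-u/a)^{-q}=a^{q}(ax-u)^{-q}$ involves a non-integer power of a negative number. To make this rigorous I would fix the principal branch of the complex power function, exactly as the Remark preceding the proposition does when computing $\mathcal{L}(k)(s)=s^{q-1}$; with this convention $a^{q}$ is well defined, and the factor $a$ from the chain rule still cancels the Jacobian $1/a$ regardless of sign, so the identity holds verbatim.

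As a cross-check, and as an alternative route when $a>0$, I would verify the identity on the Laplace side. Using the scaling rule $\mathcal{L}(f(a\cdot))(s)=a^{-1}\mathcal{L}(f)(s/a)$ together with the formula $\mathcal{L}({}^c\!D^q f)(s)=s^{q}\mathcal{L}(f)(s)-s^{q-1}f(0)$ from the Remark, and noting $g(0)=f(0)$, one finds that both ${}^c\!D^q g$ and $a^{q}\,{}^c\!D^q f(a\cdot)$ share the common transform $a^{-1}s^{q}\mathcal{L}(f)(s/a)-s^{q-1}f(0)$; injectivity of the Laplace transform then yields the claim.
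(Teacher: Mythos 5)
Your proof is correct and follows essentially the same route as the paper's: a single change of variables $u=at$ in an integral representation of the Caputo derivative, with the kernel factorization $\left(x-\frac{u}{a}\right)^{-q}=a^{q}(ax-u)^{-q}$ producing the factor $a^{q}$. The only cosmetic difference is that you start from the definition stated in the paper (with $f'$ in the integrand, handling the chain-rule factor explicitly), whereas the appendix uses the equivalent representation involving $g(t)-g(0)$; your remarks on the sign of $a$ and the Laplace-transform cross-check go beyond what the paper records.
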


\subsection{Stability of fractional-order systems}

Let us consider the  $n$-dimensional fractional-order system
\begin{equation}\label{sys.gen}
^c\!D^\mathbf{q}\mathbf{x}(t)=f(t,\mathbf{x})
\end{equation}
where $\mathbf{q}=(q_1,q_2,...,q_n)\in(0,1)^n$ and $f:[0,\infty)\times\mathbb{R}^n\rightarrow \mathbb{R}^n$ is continuous on the whole domain of definition and Lipschitz-continuous with respect to the second variable, such that
$$f(t,0)=0\quad \textrm{for any }t\geq 0.$$
Let $\varphi(t,x_0)$ denote the unique solution of (\ref{sys.gen}) which satisfies the initial condition $x(0)=x_0$ (it is important to note that the conditions on the function $f$ given above guarantee the existence and uniqueness of such a solution \cite{Diethelm_book}).

It is well-known that in general, the asymptotic stability of the trivial solution of system (\ref{sys.gen}) is not of exponential type \cite{Cermak2015,Gorenflo_Mainardi}, because of the presence of the memory effect. Due to this observation, a special type of non-exponential asymptotic stability concept has been defined for fractional-order differential equations \cite{Li_Chen_Podlubny}, called Mittag-Leffler stability.  In this paper, we are concerned with  $\mathcal{O}(t^{-\alpha})$-asymptotical stability, which reflects the algebraic decay of the solutions.

\begin{definition}\label{def.stability}
The trivial solution of (\ref{sys.gen}) is called \emph{stable} if for any $\varepsilon>0$
there exists $\delta=\delta(\varepsilon)>0$ such that for every $x_0\in\mathbb{R}^n$ satisfying $\|x_0\|<\delta$ we have
$\|\varphi(t,x_0)\|\leq\varepsilon$ for any $t\geq 0$.

The trivial solution  of (\ref{sys.gen}) is called \emph{asymptotically stable} if it is stable and there
exists $\rho>0$ such that $\lim\limits_{t\rightarrow\infty}\varphi(t,x_0)=0$ whenever $\|x_0\|<\rho$.

Let $\alpha>0$. The trivial solution  of (\ref{sys.gen}) is called \emph{$\mathcal{O}(t^{-\alpha})$-asymptotically stable} if it is stable and there exists $\rho>0$ such that for any $\|x_0\|<\rho$ one has:
$$\|\varphi(t,x_0)\|=\mathcal{O}(t^{-\alpha})\quad\textrm{as }t\rightarrow\infty.$$
\end{definition}

It is important to remark that $\mathcal{O}(t^{-\alpha})$-asymptotic stability, as defined above, clearly implies asymptotic stability.

\section{Stability results for a linear system involving one Caputo derivative}

We will first investigate the stability properties of the following linear system:
\begin{equation}\label{linearsys}
\begin{bmatrix}
^c\!D^qx(t)\\\dot{y}(t)
\end{bmatrix}=A\cdot \begin{bmatrix}
x(t)\\y(t)
\end{bmatrix}
\end{equation}
where $A=(a_{ij})$ is a real 2-dimensional matrix and $q\in(0,1)$.

\begin{remark}
We may assume $a_{12}\neq 0$. Otherwise, the first equation of system (\ref{linearsys}) would be decoupled from the second equation.
\end{remark}

Applying the Laplace transform to system (\ref{linearsys}), we obtain:
\begin{equation*}
\begin{bmatrix}
s^qX(s)-s^{q-1}x(0)\\sY(s)-y(0)
\end{bmatrix}=A\cdot \begin{bmatrix}
X(s)\\Y(s)
\end{bmatrix},
\end{equation*}
where $X(s)=\La(x)(s)$ and $Y(s)=\La(y)(s)$ denote the Laplace transforms of the functions $x$ and $y$ respectively, and $s^q$ represents the principal value (first branch) of the complex power function \cite{Doetsch}. Therefore:
$$\left(\text{diag}(s^q,s)-A\right)\cdot\begin{bmatrix}
X(s)\\Y(s)
\end{bmatrix}=\begin{bmatrix}
s^{q-1} x(0)\\ y(0)
\end{bmatrix}$$
In the following, we will denote:
\begin{align*}
\Delta_A(s)&=\det\left(\text{diag}(s^q,s)-A\right)=\\
&=s^{q+1}-a_{11}s-a_{22}s^q+\det(A).
\end{align*}
We can easily express
\begin{align}\label{transfer.functions}
\nonumber X(s)&=\frac{x(0)s^{q}(s-a_{22})+a_{12}y(0)s}{s\Delta_A(s)}\\
Y(s)&=\frac{a_{21}x(0)s^q+y(0)s(s^q-a_{11})}{s\Delta_A(s)}
\end{align}

With the aim of proving sufficient conditions for the stability of linear systems of fractional differential equations, several authors have exploited the Final Value Theorem of the Laplace Transform \cite{Deng_2007,Odibat2010}. For the sake of completeness, we state the following result:

\begin{theorem}\label{thm.lin.stab}
\begin{enumerate}
\item System (\ref{linearsys}) is $\mathcal{O}(t^{-q})$-globally asymptotically stable if and only if all the roots of $\Delta_A(s)$ are in the open left half-plane ($\Re(s)<0$).
\item If $\det(A)\neq 0$ and $\Delta_A(s)$ has at least one root in the open right half-plane ($\Re(s)>0$), then system (\ref{linearsys}) is unstable.
\end{enumerate}
\end{theorem}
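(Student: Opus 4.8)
The plan is to recover the solution of (\ref{linearsys}) from the transfer functions (\ref{transfer.functions}) by the inverse Laplace transform, and to read off its large-time behaviour through a contour deformation. Writing the components as Bromwich integrals
$$x(t)=\frac{1}{2\pi i}\int_{c-i\infty}^{c+i\infty}e^{st}X(s)\,ds,\qquad y(t)=\frac{1}{2\pi i}\int_{c-i\infty}^{c+i\infty}e^{st}Y(s)\,ds,$$
with $c>0$ to the right of every singularity, the structural observation that drives everything is that, since $s^q$ and $s^{q+1}$ enter through their principal branch, $\Delta_A(s)$ is analytic on the plane cut along $(-\infty,0]$. Hence the integrands possess a branch point at $s=0$ together with isolated poles exactly at the roots of $\Delta_A(s)$; the explicit factor $s$ in the denominator does not create a pole, because when $\det(A)\neq 0$ the numerators behave like $s^q$ near the origin and the quotient is $\mathcal{O}(s^{q-1})$, an integrable branch-type singularity.

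For the sufficiency half of statement (1) I would deform the Bromwich line onto a Hankel-type contour wrapping the cut $(-\infty,0]$, together with a small circle about the origin and a large closing arc. Since $X(s),Y(s)=\mathcal{O}(|s|^{-1})$ as $|s|\to\infty$, a Jordan-type estimate kills the large-arc contribution for $t>0$. Assuming every root of $\Delta_A(s)$ lies in the open left half-plane, the residue theorem then writes $x(t)$ and $y(t)$ as a finite sum of residues at those roots plus the integral along the cut. Each residue contributes a term $\sim e^{s_kt}$ (times a polynomial), decaying exponentially because $\Re(s_k)<0$; a root sitting on the cut is a non-generic coincidence, requiring both the real and imaginary parts of $\Delta_A$ to vanish on $(-\infty,0]$, and is accommodated if necessary by indenting the contour. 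The cut integral is then evaluated by a Watson's-lemma/Tauberian argument: the discontinuity of the integrand across the cut behaves like $r^{q-1}$ as $r\to 0^+$, and $\int_0^\infty e^{-rt}r^{q-1}\,dr=\Gamma(q)\,t^{-q}$ produces precisely the $\mathcal{O}(t^{-q})$ decay, hence boundedness, and therefore global asymptotic stability.

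The necessity half of (1) I would establish by contraposition. If $\Delta_A$ has a root $s_0$ with $\Re(s_0)\ge 0$, its residue injects a mode $\sim e^{s_0t}$ that either grows (when $\Re(s_0)>0$) or merely oscillates without decaying (when $\Re(s_0)=0$). Since the numerators in (\ref{transfer.functions}) are linear in $(x(0),y(0))$ and $a_{12}\neq 0$, one can select initial data rendering this residue nonzero, so $\mathcal{O}(t^{-q})$-asymptotic stability fails. Statement (2) is the quantitative instability version: the hypothesis $\det(A)\neq 0$ gives $\Delta_A(0)=\det(A)\neq 0$, so $s=0$ is not a root and the branch point contributes only the decaying $t^{-q}$ part; a root $s_0$ in the open right half-plane then supplies an exponentially growing residue that dominates all remaining contributions, forcing the solution to be unbounded for suitable initial conditions.

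The most delicate step is the rigorous handling of the branch cut: justifying the deformation (convergence of the integrals and vanishing of the large arc from the explicit $\mathcal{O}(|s|^{-1})$ bound on $X,Y$), checking that no root of $\Delta_A$ spoils the contour, and extracting the sharp exponent in the $t^{-q}$ rate from the local behaviour of the integrand at the origin. By comparison, the residue bookkeeping and the exponential decay of the left half-plane modes are routine once the cut contribution is controlled.
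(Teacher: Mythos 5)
Your route --- deforming the Bromwich integral onto a Hankel-type contour around the cut and reading off the asymptotics from residues plus the cut integral --- is genuinely different from the paper's, which never inverts the transform by contour integration: it combines the Final Value Theorem with two cited results, Doetsch's Theorems 37.1 and 35.1 (asymptotic expansion of the original function from the expansion of its transform at $s=0$, respectively from its dominant poles) and Bonnet--Partington's Proposition 3.1 (finiteness of the zero set of $\Delta_A$ in $\mathbb{C}\setminus\mathbb{R}_{-}$). Your plan in effect reproves those cited theorems for this special case, which is legitimate and arguably more self-contained, but three steps need real arguments rather than the passing treatment you give them.

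First, you need to know that $\Delta_A$ has only finitely many zeros in the cut plane before you can place the Bromwich abscissa to the right of all of them and speak of a finite sum of residues; this is exactly what the paper imports from Bonnet--Partington, and it is not automatic for a function built from $s^{q+1}$ and $s^q$ with $q$ irrational. Second, dismissing zeros of $\Delta_A$ on the cut as a ``non-generic coincidence'' is not available to you: the hypothesis of part 1 is only that all roots lie in the \emph{open left half-plane}, which does not exclude zeros of the boundary values $\Delta_A(-r\pm i0)$ with $r>0$; such a zero makes the cut integrand non-integrable, and the indentation you invoke produces a principal-value integral plus a half-residue whose contributions must still be shown to be $\mathcal{O}(t^{-q})$ --- this has to be done, not merely declared possible. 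Third, and most concretely, in the necessity direction the phrase ``its residue injects a mode $\sim e^{s_0t}$'' breaks down at $s_0=0$, which is a branch point rather than a pole, yet a root at the origin (equivalently $\det(A)=0$) is precisely one of the cases you must exclude. There the correct statement is that $X(s)$ acquires a genuine simple pole at the origin with nonzero residue (for instance $sX(s)\to x_0$ when $a_{22}\neq 0$), so $x(t)$ fails to tend to $0$; the paper carries out this computation explicitly via the Final Value Theorem, and your argument needs an equivalent step.
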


\begin{proof}
\emph{Part 1 - Necessity.} Assume that system (\ref{linearsys}) is $\mathcal{O}(t^{-q})$-globally asymptotically stable and let $(x(t),y(t))$ denote the solution of system (\ref{linearsys}) which satisfies the initial condition $(x(0),y(0))=(x_0,y_0)\in\mathbb{R}^2$. We may choose $x_0,y_0\neq 0$. It follows that there exist $M>0$ and $T>0$ such that
$$|x(t)|\leq\|(x(t),y(t))\|\leq Mt^{-q}\quad\textrm{for any }t\geq T.$$
We obtain that the Laplace transform $X(s)$ is absolutely continuous and holomorphic in the open right half-plane ($\Re(s)>0$) \cite{Doetsch}. Therefore, $X(s)$ does not have any poles in the open right half-plane. From (\ref{transfer.functions}), we remark that
$$X(s)=\frac{x_0s^{q}(s-a_{22})+a_{12}y_0s}{s\Delta_A(s)}$$
and the function from the numerator is holomorphic on $\mathbb{C}\setminus\mathbb{R}_{-}$. So far, we have obtained:
$$\Delta_A(s)\neq 0\quad\textrm{for any }s\in\mathbb{C},~\Re(s)>0.$$
We now argue that $\Delta_A(0)\neq 0$. Indeed, assuming that $\Delta_A(0)=0$, it follows that $\det(A)=0$ and
$$\Delta_A(s)=s^{q+1}-a_{11}s-a_{22}s^q.$$
Therefore:
\begin{align*}
\lim\limits_{s\rightarrow 0}sX(s)&=\lim\limits_{s\rightarrow 0}\frac{x_0s^{q}(s-a_{22})+a_{12}y_0s}{\Delta_A(s)}=\\
&=\lim\limits_{s\rightarrow 0}\frac{x_0(s-a_{22})+a_{12}y_0s^{1-q}}{s-a_{11}s^{1-q}-a_{22}}=\\
&=\left\{
    \begin{array}{ll}
      x_0, & \textrm{if }a_{22}\neq 0 \\
      -\frac{a_{12}}{a_{11}}y_0, & \textrm{if }a_{22}=0
    \end{array}
  \right.\quad\neq 0,
\end{align*}
which contradicts the Final Value Theorem for the Laplace transform $X(s)$ (since $x(t)\rightarrow 0$ as $t\rightarrow\infty$). Hence, $\Delta_A(0)\neq 0$.

Now  and consider the solution $(x(t),y(t))$  of system (\ref{linearsys}) which satisfies the initial condition $(x(0),y(0))=(0,\frac{1}{a_{12}})$. For $x(t)$ we obtain the Laplace transform $X(s)=\Delta_A(s)^{-1}$. Assuming that $\Delta_A(s)$ has a root on the imaginary axis (but not at the origin), it follows that $X(s)$ has a pole on the imaginary axis, which implies that $x(t)$ has persistent oscillations, contradicting the convergence of $x(t)$ to the limit $0$, as $t\rightarrow\infty$.

Therefore, we obtain $\Delta_A(s)\neq 0$, for any $s\in\mathbb{C}$, $\Re(s)\geq 0$.

\emph{Part 1 - Sufficiency.} Let $(x(t),y(t))$ denote the solution of system (\ref{linearsys}) which satisfies the initial condition $(x(0),y(0))=(x_0,y_0)\in\mathbb{R}^2$. Assuming that all the roots of $\Delta_A(s)$ are in the open left half-plane, it follows that all the poles of the Laplace transforms functions $X(s)$ and $Y(s)$ given by (\ref{transfer.functions}) are either in the open left half-plane or at the origin, and $X(s)$ and $Y(s)$ have at most a single pole at the origin (in fact, only $X(s)$ has a simple pole at the origin). A simple application of the Final Value Theorem of the Laplace transform \cite{Chen_Lundberg} yields
\begin{align*}
\lim_{t\rightarrow\infty}x(t)&=\lim_{s\rightarrow 0}sX(s)=\lim_{s\rightarrow 0}\frac{x_0s^{q}(s-a_{22})+a_{12}y_0s}{\Delta_A(s)}=0;\\
\lim_{t\rightarrow\infty}y(t)&=\lim_{s\rightarrow 0}sY(s)=\lim_{s\rightarrow 0}\frac{a_{21}x_0s^{q}+y_0s(s^q-a_{11})}{\Delta_A(s)}=0.
\end{align*}
Moreover, the Laplace transform $X(s)$ is holomorphic in the left half-plane, except at the origin and has the asymptotic expansion
$$X(s)\sim \sum_{n=0}^\infty c_n s^{\lambda_n},\quad \textrm{as } s\rightarrow 0, $$
where $\lambda_0=q-1<\lambda_1<...<\lambda_n<...$. Using Theorem 37.1 from \cite{Doetsch}, the following asymptotic expansion is obtained:
$$x(t)\sim \sum_{n=0}^\infty \frac{c_n}{\Gamma(-\lambda_n)}\frac{1}{t^{\lambda_n+1}} ,\quad \textrm{as } t\rightarrow \infty, $$
where $\Gamma$ represents the Gamma function with the understanding that
$$\frac{1}{\Gamma(-\lambda_n)}=0\quad \textrm{if }\lambda_n\in\mathbb{Z}_+.$$
As $\lambda_0+1=q$, it follows that $x(t)$ converges to $0$ as $t^{-q}$.

On the other hand, the Laplace transform $Y(s)$ is holomorphic in the whole left half-plane and has a similar asymptotic expansion as $X(s)$. As above, it follows that $y(t)$ converges to $0$ as $t^{-q}$.

Combining the convergence results for the two components $x(t)$ and $y(t)$, it follows, based on Definition \ref{def.stability} that system (\ref{linearsys}) is $\mathcal{O}(t^{-q})$-globally asymptotically stable.

\emph{Part 2.} Assume that $\det(A)\neq 0$, which is equivalent to $\Delta_A(0)\neq 0$. Consider the solution of $(x(t),y(t))$  of system (\ref{linearsys}) which satisfies the initial condition $(x(0),y(0))=(0,y_0)$, with an arbitrary $y_0\in\mathbb{R}^\star$. This solution has the Laplace transform $X(s)=a_{12}y_0\Delta_A(s)^{-1}$. Based on Proposition 3.1 from \cite{Bonnet_2002}, it follows that $\Delta_A(s)$ has a finite number of roots in $\mathbb{C}\setminus\mathbb{R}_{-}$, and in particular, in the open right half-plane. Obviously, the Laplace transform $X(s)$ is analytic in $\mathbb{C}\setminus\mathbb{R}_{-}$, except at the poles given by the roots of $\Delta_A(s)$.

If $\Delta_A(s)$ has at least one root in the open right half-plane, let us denote by $\rho>0$ the real part of a dominant pole of $X(s)$, i.e. $\rho=\max\{\Re(s):\Delta_A(s)=0\}$, and by $\nu\geq 1$ the largest order of a dominant pole. Following Theorem 35.1 from \cite{Doetsch}, we obtain that $|x(t)|$ is asymptotically equal to $k~t^{\nu-1} e^{\rho t}$ (with $k>0$) as $t\rightarrow \infty$. Hence, $x(t)$ is unbounded and therefore, system (\ref{linearsys}) is unstable.
\qed\end{proof}

Taking into account the special form of the characteristic function $\Delta_A(s)$ given above, we prove the following result:

\begin{proposition}\label{prop122}
Consider the complex-valued function
	$$\Delta (s)=s^{q+1}+as+bs^q+c,$$
where $q\in (0,1)$, $a,b,c\in\mathbb{R}$, $b>0$, and $s^q$ represents the principal value (first branch) of the complex power function.
\begin{enumerate}
\item If $c<0$, then $\Delta(s)$ has at least one positive real root.
\item $\Delta(0)=0$ if and only if $c=0$.
\item Assume $c>0$.
	\begin{enumerate}
		\item If $a\geq 0$ then all roots of $\Delta(s)$ satisfy $\Re(s)<0$.
		\item The function $\Delta(s)$ has a pair of pure imaginary roots if and only if
\begin{align}\label{eq.a.star}
a&=a^\star(b,c,q)=\\		\nonumber &=\!\!-b^q\!\left(\!h_q^{-1}\Big(\frac{c}{b^{q+1}}\Big)\!\right)^{\!\!q-1}\!\!\!\!\Big(\!h_q^{-1}\!\!\Big(\!\frac{c}{b^{q+1}}\!\Big)\cos\!\frac{q\pi}{2}+\sin\!\frac{q\pi}{2}\Big)
\end{align}
where $h_q:\Big(\cot\frac{q\pi}{2},\infty\Big)\to(0,\infty)$ is the function defined by
$$h_q(\omega)=\omega^q\Big(\omega\sin\frac{q\pi}{2}-\cos\frac{q\pi}{2}\Big).$$
\item If $s(a,b,c,q)$ is a root of $\Delta(s)$ such that
$$\Re(s(a^\star,b,c,q))=0,$$ where $a^\star=a^\star(b,c,q)$, the following transversality condition holds:
    $$\frac{\partial \Re(s)}{\partial a}\Big|_{a=a^*}<0.$$
\item All roots of $\Delta(s)$ are in the left half-plane if and only if $a>a^\star(b,c,q)$.
\item $\Delta(s)$ has a pair of roots in the right half-plane if and only if $a<a^\star(b,c,q)$.
\item For any $q\in(0,1)$, the following inequalities hold:
$$a^\star(b,c,q)\leq -b^q\leq -\min\{b,1\}.$$
\end{enumerate}
\end{enumerate}
\end{proposition}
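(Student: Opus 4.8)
The plan is to dispatch the elementary parts first and then organize the case $c>0$ as a root-location analysis driven by the parameter $a$. Parts (1) and (2) are immediate: on the positive real axis $\Delta$ restricts to a continuous real function with $\Delta(0)=c$ (since $0^q=0$ for $q\in(0,1)$) and $\Delta(x)\to+\infty$ as $x\to\infty$ because the term $x^{q+1}$ dominates; hence if $c<0$ the intermediate value theorem yields a positive real root, and $\Delta(0)=c$ gives the equivalence in (2) at once.

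For (3a) I would show that $\Delta$ has no zero in the closed right half-plane. The coefficients being real and the principal branch satisfying $\overline{s^q}=\bar s^{\,q}$ off $\mathbb{R}_-$, roots come in conjugate pairs, so it suffices to take $s=re^{i\theta}$ with $\theta\in[0,\pi/2]$ and $r>0$ (the origin is excluded since $\Delta(0)=c>0$). The positive real axis $\theta=0$ is ruled out because every term of $\Delta(r)=r^{q+1}+ar+br^q+c$ is nonnegative with $c>0$. For $\theta\in(0,\pi/2]$ I would examine $\Im\Delta(s)=r^{q+1}\sin((q+1)\theta)+ar\sin\theta+br^q\sin(q\theta)$; since $(q+1)\theta\in(0,\pi)$ all three sines are strictly positive, and with $a\ge 0$, $b>0$, $r>0$ this is strictly positive, so $\Delta(s)\neq 0$.

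The heart of the matter is $c>0$ with general $a$, which I would anchor on pure imaginary roots. Writing $s=i\omega$ with $\omega>0$ and $(i\omega)^q=\omega^q e^{iq\pi/2}$, the equation $\Delta(i\omega)=0$ splits into a real part $\omega^{q+1}\sin\frac{q\pi}{2}-b\omega^q\cos\frac{q\pi}{2}=c$ and an imaginary part $\omega^{q+1}\cos\frac{q\pi}{2}+a\omega+b\omega^q\sin\frac{q\pi}{2}=0$. The scaling $\omega=b\xi$ turns the real part into $h_q(\xi)=c/b^{q+1}$; I would verify that $h_q$ is a strictly increasing bijection of $(\cot\frac{q\pi}{2},\infty)$ onto $(0,\infty)$, since $h_q'(\xi)=\xi^{q-1}[(q+1)\xi\sin\frac{q\pi}{2}-q\cos\frac{q\pi}{2}]>0$ on that range, so $c>0$ forces a unique $\xi=h_q^{-1}(c/b^{q+1})$ and hence a unique $\omega_0$. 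Substituting into the imaginary part yields precisely $a=a^\star$, giving the equivalence in (3b). For the transversality (3c) I would differentiate $\Delta(s(a))=0$ implicitly, $\frac{ds}{da}=-s/\Delta'(s)$, and use $\mathrm{sign}\,\Re(z)=\mathrm{sign}\,\Re(z^{-1})$ to reduce the claim to $\Re(-\Delta'(i\omega_0)/(i\omega_0))<0$; a short computation shows this amounts to $(q+1)\omega_0\sin\frac{q\pi}{2}>bq\cos\frac{q\pi}{2}$, which follows from $\omega_0>b\cot\frac{q\pi}{2}$.

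With (3b) and (3c) available, (3d) and (3e) become a counting argument: for $a\neq a^\star$ there are no imaginary-axis roots, so the number of roots with $\Re(s)>0$ is constant on each of $(-\infty,a^\star)$ and $(a^\star,\infty)$; since (3a) places all roots in the left half-plane for $a\ge 0$, and $a^\star<0$ by (3f), all roots remain there on $(a^\star,\infty)$, while transversality pushes the single conjugate pair $\pm i\omega_0$ into the right half-plane as $a$ drops below $a^\star$. Finally, in (3f) the bound $b^q\ge\min\{b,1\}$ is elementary, and $a^\star\le -b^q$ is equivalent to $g(\xi):=\xi^{q-1}\big(\xi\cos\frac{q\pi}{2}+\sin\frac{q\pi}{2}\big)\ge 1$; I would minimise $g$ over all $\xi>0$, finding the global minimum value $\sin^q\frac{q\pi}{2}\cos^{1-q}\frac{q\pi}{2}\big/\big[q^q(1-q)^{1-q}\big]$, and close the estimate using the concavity facts $\sin\frac{q\pi}{2}\ge q$ and $\cos\frac{q\pi}{2}\ge 1-q$ (each function lies above its chord on $[0,1]$). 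The step I expect to be the \emph{main obstacle} is the rigorous justification of the counting in (3d)--(3e): unlike the polynomial case one must argue that the finitely many roots in $\mathbb{C}\setminus\mathbb{R}_-$ (guaranteed by the cited result of Bonnet) depend continuously on $a$ and can change half-plane only by crossing the imaginary axis, ruling out roots escaping to or arriving from infinity; the transversality calculation of (3c) is the second delicate point, though it is ultimately a bounded computation.
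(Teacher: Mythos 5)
Your proposal is correct and follows the same overall architecture as the paper's proof (intermediate value theorem for part 1, reduction of part 3 to the location of pure imaginary roots via the substitution $\omega=b\xi$, implicit differentiation for the transversality condition, and a continuity/crossing argument for 3(d)--(e)), but two sub-arguments are genuinely different and worth noting. For 3(a) the paper argues by contradiction: it solves for $s^q=\frac{-as-c}{s+b}$ and shows $\Re(s^q)\leq 0$ whenever $\Re(s)\geq 0$, contradicting $|\arg(s^q)|<\pi/2$; you instead observe that $\Im\Delta(re^{i\theta})=r^{q+1}\sin((q+1)\theta)+ar\sin\theta+br^q\sin(q\theta)$ is strictly positive for $\theta\in(0,\pi/2]$ and handle the positive real axis and conjugate symmetry separately --- an equally valid and arguably more transparent computation. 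For 3(f) the paper proves $\omega\cos\frac{q\pi}{2}+\sin\frac{q\pi}{2}\geq\omega^{1-q}$ by substituting $\theta=\arctan\omega$ and invoking concavity of $\ln\sin$ on $(0,\pi)$ (Jensen), whereas you minimize $g(\xi)=\xi^{q-1}\bigl(\xi\cos\frac{q\pi}{2}+\sin\frac{q\pi}{2}\bigr)$ explicitly at $\xi_*=\frac{1-q}{q}\tan\frac{q\pi}{2}$ and bound the minimum value $\sin^q\frac{q\pi}{2}\cos^{1-q}\frac{q\pi}{2}/\bigl[q^q(1-q)^{1-q}\bigr]$ from below by $1$ using the chord inequalities $\sin\frac{q\pi}{2}\geq q$ and $\cos\frac{q\pi}{2}\geq 1-q$; I verified this computation and it is correct, giving an alternative elementary route to $a^\star\leq -b^q$. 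Finally, your treatment of 3(d)--(e) is actually more scrupulous than the paper's: the paper concludes directly from transversality that the conjugate pair crosses into the right half-plane as $a$ decreases through $a^\star$, while you make explicit the counting argument (finitely many roots in $\mathbb{C}\setminus\mathbb{R}_-$ by Bonnet--Partington, local constancy of the number of right-half-plane roots away from $a=a^\star$, and the need to exclude roots escaping to or arriving from infinity, which can be settled by the uniform bound $|s|^{q+1}\leq|a||s|+b|s|^q+c$ for $a$ in a compact set); the gap you flag as the main obstacle is present in the published argument as well, so your version is, if anything, the more complete one.
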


\begin{proof}$ $\\
1. We have $\Delta(0)=c<0$ and $\Delta(\infty)=\infty$, and therefore, due to the continuity of the function $\Delta(s)$ on $(0,\infty)$, it follows that it has at least one positive real root.\\

\noindent 2. The proof is trivial as $\Delta(0)=c$.\\

\noindent 3.(a) Let $a\geq 0$ and $c>0$. Assuming, by contradiction, that $\Delta(s)$ has a root $s$ with $\Re(s)\geq 0$, it follows that
    $$|\arg(s)|\leq\frac{\pi}{2}\Rightarrow |\arg(s^q)|=q\cdot |\arg(s)|\leq\frac{q\pi}{2}<\frac{\pi}{2}.$$
Therefore, $\Re(s^q)>0$. On the other hand, we have:
	\begin{align*}
	s^{q+1}+as+bs^q+c=0~~\Leftrightarrow~~ s^q=\frac{-as-c}{s+b}
	\end{align*}
	and hence
	\begin{align*} \Re(s^q)&=\Re\Big(\frac{-as-c}{s+b}\Big)=\Re\left[\frac{(-as-c)(\bar{s}+b)}{|s+b|^2}\right]\\
&=\frac{\Re\big[(-as-c)(\bar{s}+b)\big]}{|s+b|^2}\\
	&=\frac{\Re(-as-c)\Re(\bar{s}+b)-\Im(-as-c)\Im(\bar{s}+b)}{|s+b|^2}\\
	&=\frac{(-a\Re(s)-c)(\Re(s)+b)+a\Im(s)(-\Im(s))}{|s+b|^2}\\
	&=\frac{-a|s|^2-(ab+c)\Re(s)-bc }{|s+b|^2}.
	\end{align*}
As $a\geq 0$, $b>0$, $c\geq 0$, then $-a|s|^2-(ab+c)\Re(s)-bc\leq 0$ and so $\Re(s^q)\leq 0$, which contradicts $\Re(s^q)>0$. We conclude that the equation $\Delta(s)=0$ does not have any roots with $\Re(s)\geq 0$.\\
	
\noindent 3.(b) Let $a<0$ and $c>0$. Assuming that $\Delta(s)$ has a pair of pure imaginary roots, there exists $\omega>0$ such that $s=ib\omega$ is a root of $\Delta(s)$. From $\Delta(ib\omega)=0$ we have:
\begin{align*}
\label{unu}
\nonumber b^{q+1}&\omega^{q+1}\Big(-\sin \frac{q\pi}{2}+i\cos\frac{q\pi}{2}\Big)+iab\omega+\\
&+b^{q+1}\Big(\cos \frac{q\pi}{2}+i\sin \frac{q\pi}{2}\Big)\omega^q+c=0
\end{align*}
Taking the real and the imaginary parts in this equation, we obtain:
	\begin{align*}
	&  -b^{q+1}\om^{q+1}\sin \frac{q\pi}{2}+b^{q+1}\omega^q\cos \frac{q\pi}{2}+c=0\\
	& b^{q+1}\omega^{q+1}\cos \frac{q\pi}{2}+ab\omega+b^{q+1}\omega^q\sin \frac{q\pi}{2}=0
	\end{align*}
	which is equivalent to
	\begin{equation}\label{doi}
	\begin{cases}
	& a= -b^{q}\omega^{q-1}\Big(\omega\cos \frac{q\pi}{2}+\sin \frac{q\pi}{2}\Big)\\
	& c=b^{q+1}\omega^q\Big(\omega\sin \frac{q\pi}{2}-\cos \frac{q\pi}{2}\Big)
	\end{cases}
	\end{equation}
	As $c>0$, it results that $\omega\sin \frac{q\pi}{2}>\cos \frac{q\pi}{2}$, which leads to $\omega>\cot\frac{q\pi}{2}$. Since
	\begin{equation}\label{treii}
	\frac{c}{b^{q+1}}=\omega^q\Big(\omega\sin \frac{q\pi}{2}-\cos \frac{q\pi}{2}\Big),
	\end{equation}
	we consider the function $h_q:\Big(\cot \frac{q\pi}{2},\infty\Big)\to(0,\infty)$ defined by
$$h_q(\omega)=\omega^q\Big(\omega\sin \frac{q\pi}{2}-\cos \frac{q\pi}{2}\Big).$$
It is easy to see that for any $\omega> \cot\frac{q\pi}{2}$, we have:
$${h_q}'(\omega)=q\omega^{q-1}\Big(\omega\sin \frac{q\pi}{2}-\cos \frac{q\pi}{2}\Big)+\omega^q\sin \frac{q\pi}{2}> 0. $$
Hence, $h_q$ is an increasing function on the interval $\Big(\cot \frac{q\pi}{2},\infty\Big)$ and therefore $h_q$ is invertible, with the inverse denoted by $h_q^{-1}:(0,\infty)\to (\cot \frac{q\pi}{2},\infty)$. Hence, from (\ref{treii}) we obtain:
	$$\omega=h_q^{-1}\Big(\frac{c}{b^{q+1}}\Big).$$
From the first equation of (\ref{doi}), we conclude that $a=a^\star(b,c,q)$.\\
	
\noindent 3.(c) Let $s(a,b,c,q)$ denote the root of $\Delta(s)$ with the property $$s(a^\star,b,c,q)=ib\omega,$$ as in 3.(b), where $a^\star=a(b,c,q)$.
Differentiating with respect to $a$ in the equation:
$$s^{q+1}+as+bs^q+c=0$$
we obtain:
$$(q+1)s^q\cdot \frac{\partial s}{\partial a}+s+a\cdot \frac{\partial s}{\partial a}+b\cdot q\cdot s^{q-1}\cdot \frac{\partial s}{\partial a}=0,$$
and hence:
$$
	\frac{\partial s}{\partial a}=\frac{-s}{(q+1)s^q+qbs^{q-1}+a}.
$$
Taking the real part of this equation, we obtain:
$$
	\frac{\partial \Re(s)}{\partial a}=\Re\Big(\frac{\partial s}{\partial a}\Big)=\Re\Big(\frac{-s}{(q+1)s^q+qbs^{q-1}+a}\Big).
$$
Therefore:
$$\frac{\partial \Re(s)}{\partial a}\Big|_{a=a^\star}=\Re\Big(\frac{-ib\omega}{(q+1)(ib\omega)^q+qb(ib\omega)^{q-1}+a^\star}\Big).$$
Denoting $P(\omega)=(q+1)(ib\omega)^q+qb(ib\omega)^{q-1}+a^\star$, we obtain:
\begin{align*}
\frac{\partial \Re(s)}{\partial a}\Big|_{a=a^\star}&=\Re\Big( \frac{-ib\omega}{P(\omega)}\Big)=b\omega\cdot\Re\Big(\frac{-i\overline{P(\omega)}}{|P(\omega)|^2}\Big)\\
&=\frac{b\omega}{|P(\omega)|^2}\cdot(-\Im (P(\omega)))
\end{align*}
As
\begin{align*}
P(\omega)=&(q+1)b^q\omega^q\Big(\cos \frac{q\pi}{2}+i\sin \frac{q\pi}{2}\Big)+\\
&+qb^q\omega^{q-1}(-i)\Big(\cos \frac{q\pi}{2}+i\sin \frac{q\pi}{2}\Big)+a^\star,
\end{align*}
we have:
$$\Im (P(\omega))=(q+1)b^q\omega^q\sin \frac{q\pi}{2}-qb^q\omega^{q-1}\cos \frac{q\pi}{2}.$$
As $\om>\cot\left(\frac{q\pi}{2}\right)$, it results that
$$\frac{\partial \Re(s)}{\partial a}\Big|_{a=a^\star}=\frac{b^{q+1}\omega^q}{|P(\omega)|^2}\Big(q\cos \frac{q\pi}{2}-(q+1)\omega\sin\frac{q\pi}{2}\Big)<0.$$

\noindent 3.(d,e) The transversality condition obtained above shows that $\Re(s)$ is decreasing in a neighborhood of $a^\star$, and therefore, when $a$ decreases below the critical value $a^\star=a^\star(b,c,q)$, the pair of complex conjugated roots $(s,\overline{s})$ cross the imaginary axis from the left half-plane into the right half-plane. Combined with 3.(a), we obtain the desired conclusions.\\

\noindent 3.(f) We will first prove that for any $q\in(0,1)$ and any $\omega>0$, the following inequality holds:
\begin{equation}\label{ineq.q.w}
\omega\cos\frac{q\pi}{2}+\sin\frac{q\pi}{2}\geq \omega^{1-q}.
\end{equation}
Indeed, let us denote $\theta=\arctan\omega\in\left(0,\frac{\pi}{2}\right)$. Inequality (\ref{ineq.q.w}) is equivalent to
$$\tan\theta\cos\frac{q\pi}{2}+\sin\frac{q\pi}{2}\geq (\tan\theta)^{1-q}$$
which can be rewritten as
$$\sin\left(\theta+\frac{q\pi}{2}\right)\geq (\sin\theta)^{1-q}(\cos\theta)^q.$$
As the natural logarithm is an increasing function and $\cos(\theta)=\sin\left(\theta+\frac{\pi}{2}\right)$, this inequality is further equivalent to:
$$
\ln\left(\sin\left(\theta+\frac{q\pi}{2}\right)\right)\geq (1-q)\ln\left(\sin\theta\right)+q\ln\left(\sin\left(\theta+\frac{\pi}{2}\right)\right).
$$
This last inequality easily follows from the fact that the function $f(x)=\ln(\sin(x))$, defined on the interval $(0,\pi)$, is a concave function (Jensen's inequality).

Therefore, inequality (\ref{ineq.q.w}) holds and based on the definition of $a^\star(b,c,q)$, it leads to $a^\star(b,c,q)\leq-b^q$, for any $b>0$, $c>0$ and $q\in(0,1)$.

The second inequality easily follows from the properties of the function $b^x$, where $b>0$ and $x\in[0,1]$. If $b\in(0,1)$ then $b^x$ is decreasing on $[0,1]$ and therefore, for any $q\in[0,1]$, we have $b^q\geq b^1=b=\min\{b,1\}$. On the other hand, if $b\geq 1$, then $b^x$ is increasing on $[0,1]$ and hence, for any $q\in[0,1]$, we obtain $b^q\geq b^0=1=\min\{b,1\}$. We conclude that $b^q\geq\min\{b,1\}$, for any $b>0$ and $q\in[0,1]$.\qed\end{proof}

With the aim of deducing sufficient stability conditions which do not depend on the fractional order $q$, we state the following:

\begin{proposition}\label{prop.indep.q} Let $b>0$, $c>0$, and consider the complex-valued function $\Delta(s)$ defined as in Proposition \ref{prop122}.
\begin{enumerate}
		\item If $a>-\min\{b,1\}$ then all roots of $\Delta(s)$ are in the open left-half plane, regardless of $q$.
		\item Let $a\leq -\min\{b,1\}$. If either of the following conditions hold
		\begin{enumerate}
			\item $a+b+c+1\leq 0$;
			\item $0<a+b+c+1<(\sqrt{c}-1)^2$ and $c>1$;
		\end{enumerate}
then $\Delta(s)$ has at least one positive real root, regardless of $q$.
\end{enumerate}
\end{proposition}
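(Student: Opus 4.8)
The plan is to treat the two statements separately, reducing each to results already available in Proposition \ref{prop122} (which applies since $b>0$ and $c>0$). \emph{Part 1} I would deduce directly from items 3.(d) and 3.(f) of Proposition \ref{prop122}: item 3.(f) guarantees $a^\star(b,c,q)\le -b^q\le -\min\{b,1\}$ for every $q\in(0,1)$, so the hypothesis $a>-\min\{b,1\}$ forces $a>a^\star(b,c,q)$ simultaneously for all $q$. Item 3.(d) then yields that every root of $\Delta(s)$ lies in the open left half-plane, regardless of $q$, and no further computation is needed.

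For \emph{Part 2}, in both subcases I would search for a positive real root of the real function $x\mapsto\Delta(x)$ on $(0,\infty)$. Since $\Delta(0)=c>0$ and $\Delta(x)\to+\infty$ as $x\to\infty$, by the intermediate value theorem it suffices to exhibit one point $x_0>0$ with $\Delta(x_0)\le 0$. In subcase (a) this is immediate: $\Delta(1)=1+a+b+c\le 0$ is a value independent of $q$, so a root in $(0,1]$ exists for every $q$.

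Subcase (b) is the substantial one, and the appearance of $(\sqrt{c}-1)^2$ and $c>1$ suggests comparing $\Delta$ with the integer-order characteristic polynomial $P(x)=x^2+(a+b)x+c$ (formally the case $q=1$). The elementary estimate $x^q<x$ for $x>1$, $q\in(0,1)$, together with $x+b>0$, gives
$$\Delta(x)=x^q(x+b)+ax+c<x(x+b)+ax+c=P(x)\qquad\text{for all }x>1.$$
The hypothesis $a+b+c+1<(\sqrt{c}-1)^2$ rearranges to $a+b<-2\sqrt{c}$, which with $c>1$ makes the discriminant $(a+b)^2-4c$ positive and forces $a+b<-2$; as the roots of $P$ have sum $-(a+b)>0$ and product $c>0$, $P$ has two positive roots $r_1\le r_2$. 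The remaining hypothesis $a+b+c+1>0$ is precisely what places both roots beyond $1$: since $-(a+b)-2>0$, one has the equivalences
$$r_1>1\iff -(a+b)-2>\sqrt{(a+b)^2-4c}\iff a+b+c+1>0,$$
so $r_1>1$. Picking any $x_0\in(r_1,r_2)$ gives $x_0>1$ and $P(x_0)<0$, whence $\Delta(x_0)<P(x_0)<0$; the intermediate value theorem then produces a positive real root of $\Delta$, and since $P$ and the comparison are independent of $q$, this holds regardless of $q$.

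The routine parts are the algebraic rearrangements and the intermediate-value conclusions. The step requiring genuine care is the root-location argument in subcase (b): one must verify that \emph{both} roots of $P$ exceed $1$, so that the inequality $\Delta(x)<P(x)$ — valid only on $(1,\infty)$ — can actually be invoked at a point where $P$ is negative, and the squaring in the equivalence for $r_1>1$ must be justified by the sign condition $-(a+b)-2>0$, which is exactly where $c>1$ enters.
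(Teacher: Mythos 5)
Your proposal is correct and follows essentially the same route as the paper: Part 1 via Proposition \ref{prop122}(d) and (f), Part 2(a) via the sign of $\Delta(1)$, and Part 2(b) via the comparison $\Delta(x)\le x^2+(a+b)x+c$ on $(1,\infty)$ together with locating where that quadratic is negative. Your explicit verification that both roots of the quadratic exceed $1$ (rather than the paper's equivalent vertex-and-discriminant phrasing) is a minor, welcome refinement, not a different method.
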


\begin{proof}
1.  Let us consider an arbitrary $q\in(0,1)$ and $a>-\min\{b,1\}$. From Proposition \ref{prop122}.(f) we have
$$a> -\min\{b,1\}\geq -b^q\geq a^\star(b,c,q).$$
Hence, based on Proposition \ref{prop122}.(d) it follows that all the roots of $\Delta(s)$ are in the open left half-plane.
 			
\noindent 2. We have:
		$$\Delta(s)\leq \begin{cases}
			& (1+a)s+b+c, \quad \textrm{if }s\in(0,1),\\
			& s^2+(a+b)s+c,\quad \textrm{if } s\geq 1,
			\end{cases}$$
			and we denote $(1+a)s+b+c=p_1(s)$ and $s^2+(a+b)s+c=p_2(s)$. It is easy to see that if either $p_1$ or $p_2$ have positive real roots, so does $\Delta(s)$.

\noindent (a) If $a+b+c+1\leq 0$ then $\Delta(1)=1+a+b+c\leq 0$ and $\Delta(\infty)=\infty$. So $\Delta(s)$ has at least one real root belonging to the interval $[1,\infty)$.
				
\noindent (b) If $a+b+c+1>0$ then $p_1(s)>0$ for every $s\in(0,1)$. Since $c>0$, elementary calculus shows that necessary and sufficient conditions for $p_2(s)$ to take negative values in a subinterval of $[1,\infty)$ are: discriminant $\delta=(a+b)^2-4c\geq 0$ and $-\frac{a+b}{2}>1$ (i.e. the minimum point of $p_2$ is larger than $1$). In turn, these are equivalent to $c>1$ and $-(c+1)<a+b<-2\sqrt{c}$, which lead to the desired conclusion.
\qed\end{proof}

Based on Theorem \ref{thm.lin.stab} and Propositions \ref{prop122} and \ref{prop.indep.q}, we obtain the following conditions for the stability of system (\ref{linearsys}), with respect to its coefficients and the fractional order $q$:

\begin{corollary}\label{cor.stab.lin}
Consider the linear system (\ref{linearsys}) with the fractional order $q\in(0,1)$. Denoting $a=-a_{11}$, $b=-a_{22}$, $c=\det(A)$ and assuming $b>0$, it follows that:
\begin{enumerate}
\item If $c<0$, system (\ref{linearsys}) is unstable, regardless of the fractional order $q$.
\item Assume that $c>0$.
\begin{itemize}
\item[(a)] System (\ref{linearsys}) is $\mathcal{O}(t^{-q})$-asymptotically stable if and only if $a>a^\star(b,c,q)$, where $a^\star(b,c,q)$ is defined by (\ref{eq.a.star}).
\item[(b)] If $a>-\min\{b,1\}$, system (\ref{linearsys}) is asymptotically stable, regardless of the fractional order $q$.
\item[(c)] System (\ref{linearsys}) is unstable if $a<a^\star(b,c,q)$, where $a^\star(b,c,q)$ is defined by (\ref{eq.a.star}).
\item[(d)] If $a\leq -\min\{b,1\}$ and either of the following conditions hold
		\begin{itemize}
			\item $a+b+c+1\leq 0$;
			\item $0<a+b+c+1<(\sqrt{c}-1)^2$ and $c>1$;
		\end{itemize}
then system (\ref{linearsys}) is unstable, regardless of the fractional order $q$.
\end{itemize}
\end{enumerate}
\end{corollary}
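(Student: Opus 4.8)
The plan is to recognize that this corollary is a direct dictionary translation of the previously established results: the substitution $a=-a_{11}$, $b=-a_{22}$, $c=\det(A)$ turns the characteristic function $\Delta_A(s)$ of system (\ref{linearsys}) into exactly the function $\Delta(s)=s^{q+1}+as+bs^q+c$ analyzed in Proposition \ref{prop122}. Thus every assertion will follow by pairing the root-location statements of Propositions \ref{prop122} and \ref{prop.indep.q} with the stability/instability criteria of Theorem \ref{thm.lin.stab}. The overarching principle is that a claim of asymptotic stability corresponds, via Theorem \ref{thm.lin.stab}.(1), to all roots of $\Delta$ lying in the open left half-plane, whereas a claim of instability corresponds, via Theorem \ref{thm.lin.stab}.(2), to $\Delta$ having a root in the open right half-plane, provided $\det(A)\neq 0$.

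Carrying this out case by case: for part 1, $c<0$ gives $\det(A)=c\neq 0$, while Proposition \ref{prop122}.(1) supplies a positive real root of $\Delta$, so instability follows from Theorem \ref{thm.lin.stab}.(2). For part 2(a), Theorem \ref{thm.lin.stab}.(1) reduces $\mathcal{O}(t^{-q})$-asymptotic stability to all roots of $\Delta$ lying in the open left half-plane, which by Proposition \ref{prop122}.(3)(d) is equivalent to $a>a^\star(b,c,q)$. For 2(b), Proposition \ref{prop.indep.q}.(1) places all roots in the open left half-plane whenever $a>-\min\{b,1\}$ independently of $q$, and Theorem \ref{thm.lin.stab}.(1) then yields asymptotic stability. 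For 2(c), Proposition \ref{prop122}.(3)(e) produces a pair of roots in the right half-plane when $a<a^\star$, and since $c>0$ forces $\det(A)\neq 0$, Theorem \ref{thm.lin.stab}.(2) delivers instability. For 2(d), Proposition \ref{prop.indep.q}.(2) furnishes a positive real root of $\Delta$ under each listed condition, and again $\det(A)=c>0\neq 0$, so Theorem \ref{thm.lin.stab}.(2) applies.

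Since the argument is essentially bookkeeping, the only point that demands genuine care---and hence the main obstacle---is verifying in each instability case that the hypothesis $\det(A)\neq 0$ of Theorem \ref{thm.lin.stab}.(2) is actually satisfied, preventing a spurious root at the origin from being mistaken for a right-half-plane root. Fortunately, in every instability case the sign condition on $c$ (either $c<0$ in part 1 or $c>0$ in parts 2(c) and 2(d)) already guarantees $c=\det(A)\neq 0$, so this obstacle dissolves once the signs are checked. I would also flag that the stability in 2(a) is asserted in the sharp $\mathcal{O}(t^{-q})$ form, whereas 2(b) claims only ordinary asymptotic stability, consistent with 2(b) being the $q$-independent weakening obtained by replacing the exact threshold $a^\star$ with the uniform bound $-\min\{b,1\}$.
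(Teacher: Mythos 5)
Your proposal is correct and follows exactly the route the paper intends: the corollary is stated there with no separate proof beyond the phrase ``Based on Theorem \ref{thm.lin.stab} and Propositions \ref{prop122} and \ref{prop.indep.q}'', i.e.\ precisely the dictionary translation and case-by-case pairing you carry out. Your additional check that $\det(A)=c\neq 0$ holds in every instability case, so that Theorem \ref{thm.lin.stab}.(2) is applicable, is the one nontrivial verification and you handle it correctly.
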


\begin{remark}
Condition 2.(a) from Corollary \ref{cor.stab.lin} is a generalization of the well known Routh-Hurwitz conditions for two dimensional systems of first order differential equations. Indeed, if $A$ is the system's matrix, the Routh-Hurwitz conditions provide that the (first-order) system is asymptotically (exponentially) stable if and only if $\text{trace}(A)<0$ and $\det(A)>0$. In our setting, it can easily be seen that for $q=1$, equation (\ref{eq.a.star}) gives $a^\star(b,c,1)=-b$. Hence, with the notations $a=-a_{11}$, $b=-a_{22}$ and $c=\det(A)$, condition 2.(a) for the particular case $q=1$ is equivalent to the Routh-Hurwitz conditions.
\end{remark}

\begin{remark}
Throughout this section, we have considered the assumption $b=-a_{22}>0$. A similar lengthy reasoning can also be applied in the case $b\leq 0$, to deduce necessary and sufficient conditions for the asymptotic stability or instability of system (\ref{linearsys}). However, as we will see in the following section, in the stability analysis of the equilibrium states of a fractional-order Morris-Lecar neuronal model we have a positive coefficient $b$, which explains the restriction of our analysis to the case $b>0$.
\end{remark}

\section{The fractional-order Morris-Lecar neuronal model}

\subsection{Construction of the fractional-order model}

Neuronal activity of biological neurons has been typically modeled using the classical Hodgkin-Huxley mathematical model \cite{Hodgkin-Huxley}, dating back to 1952, including four nonlinear differential equations for the membrane potential and gating variables of ionic currents. Several lower dimensional simplified versions of the Hodgkin-Huxley model have been introduced in 1962 by Fitzhugh and Nagumo \cite{FitzHugh}, in 1981 by Morris and Lecar \cite{Morris_Lecar_1981} and in 1982 by Hindmarsh and Rose \cite{Hindmarsh-Rose-1}. These simplified models have an important advantage: while still being relatively simple, they allow for a good qualitative description of many different patterns of the membrane potential observed in experiments.

The classical Morris-Lecar neuronal model \cite{Morris_Lecar_1981} describes the oscillatory voltage patterns of Barnacle muscle fibers. Mathematically, the Morris-Lecar model is described by the following system of differential equations:
\begin{equation}\label{ML-model}
\left\{
  \begin{array}{rl}
C_m\frac{\mathrm{d}V}{\mathrm{d}t}=&g_{Ca}M_{\infty}(V)(V_{Ca}-V)+g_K N(V_K-V)+\\
&+g_L(V_L-V)+I\\
\frac{\mathrm{d}N}{\mathrm{d}t}=&\overline{\lambda_N}\cdot\lambda(V)(N_{\infty}(V)-N)
  \end{array}
\right.
\end{equation}
where $V$ is the membrane potential, $N$ is the gating variable for $K^+$, $C_m$ is the membrane capacitance, $I$ represents the externally applied current, $V_{Ca}$, $V_K$ and $V_L$ denote the equilibrium potentials for $Ca^{2+}$, $K^+$ the leak current and $g_{Ca}$, $g_K$ and $g_L$ are positive constants representing the maximum conductances of the corresponding ionic currents, and $\overline{\lambda_N}$ is the maximum rate constant for the $K^+$ channel opening.

The following assumptions are usually taken into consideration:
\begin{itemize}
  \item $M_\infty$ and $N_\infty$ are increasing functions of class $C^1$ defined on $\mathbb{R}$ with values in $(0,1)$;
  \item $\lambda$ is a positive function of class $C^1$ on $\mathbb{R}$;
  \item $V_K<V_L<0<V_{Ca}$.
\end{itemize}

The conductances of both $Ca^{2+}$ and $K^+$ are sigmoid functions with respect to the membrane voltage $V$. Particular functions considered previously in the literature, which satisfy the above assumptions, are:
\begin{align*}
M_{\infty}(V) &=\frac{1}{2}\left ( 1+\tanh\left (  \frac{V-V_1}{V_2} \right ) \right )\\
N_{\infty}(V) &=\frac{1}{2}\left ( 1+\tanh\left (  \frac{V-V_3}{V_4} \right ) \right )\\
\lambda(V) &=\cosh\left (  \frac{V-V_3}{V_4} \right )
\end{align*}
where $V_i$ are positive constants, $i\in\{1,2,3,4\}$.

In electrophysiological experiments, the neuronal membrane is considered to be equivalent to a resistor-capacitor circuit. In this context, based on experimental observations concerning biological neurons \cite{Anastasio,Lundstrom}, the fractional-order capacitor proposed by Westerlund and Ekstam \cite{Westerlund1994capacitor} has an utmost importance. They showed that Jacques Curie's empirical law for the current through capacitors and dielectrics leads to the following capacitive current-voltage relationship for a non-ideal capacitor:
$$I_c^{\alpha}=C_m^{\alpha}\frac{d^{\alpha}V_c}{dt^{\alpha}}$$
where $0<\alpha<1$, the fractional-order capacitance with units (amp/volt)sec$^\alpha$ is denoted by $C_m^{\alpha}$, and $\frac{d^{\alpha}}{dt^{\alpha}}$ represents a fractional-order differential operator \cite{Weinberg_2015}.

Several types of fractional-order neuronal models have been investigated in the recent years: fractional leaky integrate-and-fire model \cite{Teka2014neuronal}, fractional-order Hindmarsh-Rose model \cite{Jun_2014},  three-dimensional slow-fast fractional-order Morris-Lecar models \cite{Shi_2014,Upadhyay2016fractional} and fractional-order Hodgkin-Huxley models \cite{Teka2016power,Weinberg_2015}.

Starting from system (\ref{ML-model}), we first consider the following general fractional-order Morris-Lecar neuronal model with two fractional orders $p,q\in(0,1)$:
\begin{equation}\label{sistem1}
\left\{
  \begin{array}{rl}
C_m(q)\cdot ^c\!\!D^qV(t)=&g_{Ca}M_{\infty}(V)(V_{Ca}-V)+g_KN(V_K-V)+\\
&+g_L(V_L-V)+I\\
^cD^pN(t)=&\overline{\lambda_N}^p\cdot\lambda(V)(N_\infty(V)-N)
  \end{array}
\right.
\end{equation}
where $C_m(q)=\frac{\tau^q}{R_m}$ is the membrane capacitance \cite{Weinberg_2015}, $R_m$ is the membrane resistance, $\tau$ is the time constant. It is important to note that for $q=1$ we obtain the classical formula for the classical integer-order capacitance. We emphasize that the inclusion of the fractional-order capacitance to the left hand-side of the first equation is a straightforward answer to the dimensional consistency problem (units of measurement consistency) of system (\ref{sistem1}). For the same reason, in right side of the second equation we introduce the term $\overline{\lambda_N}^p$ (see for example \cite{Diethelm2013dengue} for a similar approach), and in this way, the dimensions of both sides coincide, and are (seconds)$^{-p}$.

For the theoretical investigation of the fractional order Morris-Lecar neuronal model, we nondimensionalize the system (\ref{sistem1}) as in Appendix B, with the substitutions:
$$v(t)=\frac{V(\tau t)}{V_{Ca}}\quad,\quad n(t)=N(\tau t).$$
Considering the following dimensionless constants:
\begin{align*}
&v_K=\frac{V_K}{V_{Ca}}, \quad v_L=\frac{V_L}{V_{Ca}},\quad v_i=\frac{V_i}{V_{Ca}},~~i\in\{1,2,3,4\}\\
&\gamma_x=R_m\cdot g_x,~~x\in\{Ca,K,L\},\quad\tilde{I}=R_m\cdot \frac{I}{V_{Ca}},
\end{align*}
and the functions
\begin{align*}
m_\infty(v)=&M_\infty(V_{Ca}v)=\frac{1}{2}\Big(1+\tanh\left(\frac{v-v_1}{v_2}\right)\Big),\\
n_{\infty}(v)=&N_\infty(V_{Ca}v)=\frac{1}{2}\left ( 1+\tanh\left (  \frac{v-v_3}{v_4} \right ) \right ),\\
\ell(v)=&\lambda(V_{Ca}v)=\cosh\Big(\frac{v-v_3}{2v_4}\Big),
\end{align*}
we obtain the following nondimensional fractional-order system
\begin{equation}\label{sistem2}
\left\{
\begin{array}{rl}
^c\!D^qv(t)=&\gamma_{Ca}m_\infty(v)(1-v)+\gamma_K\cdot n(v_K-v)+\\
&+\gamma_L(v_L-v)+\tilde{I}\\
^c\!D^pn(t)=&(\tau\overline{\lambda_N})^p\cdot \ell(v)(n_\infty(v)-n)
\end{array}\right.
\end{equation}
It is important to notice that, based on this procedure, the nondimensional system also involves the term $(\tau\overline{\lambda_N})^p$ in the right hand side of the second equation. Therefore, the correct version of the fractional-order variant of the Morris-Lecar neuronal model is quite different from the nondimensional version considered in \cite{Shi_2014}, where the fractional-order capacitance $C_m(q)$ appearing in the dimensional system has not been taken into account (nor the dimensional consistency problem) and equal fractional orders have been considered for both equations (i.e. $p=q$). In fact, it appears that the nondimensionalization process which had to be carried out to obtain the nondimensional system in \cite{Shi_2014} did not take into account the simple property presented in Proposition \ref{prop_aq}.

In fact, we have to remark that there is no known biological reason to consider a fractional order derivative for the gating variable, and therefore, in what follows, we consider $p=1$ as in \cite{Weinberg_2015}, obtaining the following nondimensional system:
\begin{equation}\label{sistem3}
\left\{
\begin{array}{rl}
^c\!D^qv(t)=&\gamma_{Ca}m_\infty(v)(1-v)+\gamma_K\cdot n(v_K-v)+\\
&+\gamma_L(v_L-v)+\tilde{I}\\
\dot{n}(t)=&\phi\ell(v)(n_\infty(v)-n)
\end{array}\right.
\end{equation}
where $\phi=\tau\cdot \overline{\lambda_N}$.

\subsection{Existence of equilibrium states}
System (\ref{sistem3}) is a particular case of the following generic two-dimensional fractional-order conductance-based neuronal model:
\begin{equation}\label{sistem4}
\left\{
\begin{array}{rl}
^c\!D^qv(t)&=I-I(v,n)\\
\dot{n}(t)&=\phi\ell(v)(n_\infty(v)-n)
\end{array}\right.
\end{equation}
where $v$ and $n$ represent the membrane potential and the gating variable of the neuron, $I$ is an externally applied current, $I(v,n)$ represents the ionic current, $\ell(v)$ and $n_\infty(v)$ are the rate constant for opening ionic channels and the fraction of open ionic channels at steady state, respectively.

In particular, for the Morris-Lecar fractional neuronal model (\ref{sistem3}), we have:
\begin{equation}\label{eq.I.ML}
  I(v,n)=\gamma_{Ca}m_\infty(v)(v-1)+\gamma_K\cdot n(v-v_K)+\gamma_L(v-v_L).
\end{equation}

The equilibrium states of system (\ref{sistem4}) are the solutions of the following algebraic system:
\begin{equation*}
\begin{cases}
&I=I(v,n)\\
&n=n_\infty(v)
\end{cases},
\end{equation*}
which is equivalent to
\begin{equation*}
\begin{cases}
&I=I(v,n_\infty(v)):=I_\infty(v)\\
&n=n_\infty(v)
\end{cases}.
\end{equation*}

In the following, we assume that the function $I_\infty(v)$ satisfies the following properties:
\begin{itemize}
  \item[(A1)] $I_\infty\in C^1(\mathbb{R})$;
  \item[(A2)] $\ds\lim\limits_{v\to -\infty}I_\infty(v)=-\infty$ and $\ds\lim\limits_{v\to\infty}I_\infty(v)=\infty$;
  \item[(A3)] $I'_{\infty}$ has exactly two real roots $v_\alpha<v_\beta$.
\end{itemize}
It is important to underline that these properties are satisfied in the particular case of the Morris-Lecar neuronal model with the function $I(v,n)$ given by (\ref{eq.I.ML}).

We denote $I_{max}=I_\infty(v_\alpha)$, $I_{min}=I_\infty(v_\beta)$. Then $I_\infty$ is increasing on the intervals $(-\infty, v_\alpha]$ and $[v_\beta, \infty)$ and decreasing on the interval $(v_\alpha, v_\beta)$.

As $I_\infty:(-\infty, v_\alpha]\to (-\infty, I_{max}]$ is increasing and continuous, it follows that it is bijective. We denote
$I_1=I_\infty|_{(-\infty, v_\alpha]}$ the restriction of $I_\infty$ to the interval $(-\infty,v_\alpha]$ and consider its inverse:
$$v_1:(-\infty, I_{max}]\to (-\infty, v_\alpha],\quad v_1(I)=I_1^{-1}(I).$$
Therefore, $(v_1(I),n_\infty(v_1(I)))$, with $I<I_{max}$, represents the first branch of equilibrium states of system (\ref{sistem4}).

We obtain the other two branches of equilibrium states in a similar way:
$$I_2=I_\infty|_{(v_\alpha,v_\beta)},\quad v_2:(I_{min},I_{max})\to (v_\alpha, v_\beta),\quad v_2(I)=I_2^{-1}(I)$$
$$I_3=I_\infty|_{[v_\beta, \infty)},\quad v_3:[I_{min}, \infty)\to [v_\beta, \infty),\quad v_3(I)=I_3^{-1}(I).$$

\begin{remark}
	We have the following situations:
	\begin{itemize}
		\item If $I<I_{min}$ or if $I>I_{max}$, then system (\ref{sistem4}) has an unique equilibrium state.
		\item If $I=I_{min}$ or if $I=I_{max}$, then system (\ref{sistem4}) has two equilibrium states.
		\item If $I\in (I_{min},I_{max})$, then system (\ref{sistem4}) has three equilibrium states.
	\end{itemize}
\end{remark}

\subsection{Stability of equilibrium states}
The Jacobian matrix associated to the system (\ref{sistem4}) at an equilibrium state $(v^\star,n^\star)=(v^\star, n_\infty(v^\star))$ is:
\begin{equation*}
J=\begin{bmatrix}
-I_v(v^\star,n^\star)& -I_n(v^\star,n^\star)\\
\phi\ell'(v^\star)[n_\infty(v^\star)-n^\star]+\phi\ell(v^\star)n'_\infty(v^\star)& -\phi\ell(v^\star))
\end{bmatrix}
\end{equation*}
Since $n_{\infty}(v^\star)=n^\star$, we have:
\begin{equation*}
J=\begin{bmatrix}
-I_v(v^\star,n_{\infty}(v^\star))& -I_n(v^\star,n_{\infty}(v^\star))\\
\phi\ell(v^\star)n'_\infty(v^\star)& -\phi\ell(v^\star))
\end{bmatrix}
\end{equation*}
Based on the considerations from section 3, the characteristic equation at the equilibrium state $(v^\star,n^\star)$ is
\begin{equation}\label{ec.car}
s^{q+1}+a(v^\star)s+b(v^\star)s^q+c(v^\star)=0
\end{equation}
where
\begin{align*}
a(v^\star)&=I_v(v^\star,n_\infty(v^\star)),\\
b(v^\star)&=\phi\ell(v^\star)>0,\\
c(v^\star)&=\det(J)=\\
&=\phi\ell(v^\star)[I_v(v^\star, n_\infty(v^\star))+n'_\infty(v^\star)\cdot I_n(v^\star, n_\infty(v^\star))]=\\
&=\phi\ell(v^\star)I'_\infty(v^\star).
\end{align*}

The equilibrium point $(v^\star,n^\star)=(v^\star, n_\infty(v^\star))$ is asymptotically stable if and only if all the roots of the characteristic equation (\ref{ec.car}) are in the left half-plane (i.e. $Re(s)<0$). In what follows, we will show how the theoretical results presented in section 3 can be applied to analyze the stability of the steady states of systems (\ref{sistem3}) and (\ref{sistem4}).

\begin{proposition}\label{prop.b2.unstable}
The second branch of equilibrium points \linebreak $(v_2(I),n_\infty(v_2(I)))$ (with $I\in(I_{min},I_{max})$) of system (\ref{sistem4}) is unstable, regardless of the fractional order $q$.
\end{proposition}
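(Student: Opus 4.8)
The plan is to reduce the statement entirely to the sign of the constant coefficient $c$ in the characteristic equation (\ref{ec.car}) and then to quote the $c<0$ branch of the instability results from Section 3. First I would record the three coefficients attached to an equilibrium $(v^\star,n^\star)=(v^\star,n_\infty(v^\star))$ lying on the second branch, namely $a(v^\star)=I_v(v^\star,n_\infty(v^\star))$, $b(v^\star)=\phi\ell(v^\star)$ and $c(v^\star)=\det(J)=\phi\ell(v^\star)I'_\infty(v^\star)$, exactly as computed just before the statement. The decisive observation is that, by construction, $v^\star=v_2(I)\in(v_\alpha,v_\beta)$, and on this middle interval the function $I_\infty$ is strictly decreasing (as recorded in the discussion following assumption (A3), since $I'_\infty$ vanishes only at $v_\alpha<v_\beta$ and $I_\infty$ attains its local maximum $I_{max}$ at $v_\alpha$ and local minimum $I_{min}$ at $v_\beta$). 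Consequently $I'_\infty(v^\star)<0$.

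Since $\phi>0$ and $\ell$ is a strictly positive function (one of the standing assumptions on the model), I get at once that $b(v^\star)>0$ and $c(v^\star)=\phi\ell(v^\star)I'_\infty(v^\star)<0$; in particular $c(v^\star)\neq0$, so $\det(J)\neq0$. With the identifications $a=a(v^\star)$, $b=b(v^\star)>0$, $c=c(v^\star)<0$, I would then invoke Corollary \ref{cor.stab.lin}.1 directly. Equivalently, one can split this into the two building blocks: Proposition \ref{prop122}.1 guarantees that $\Delta(s)$ has at least one positive real root whenever $c<0$, and Theorem \ref{thm.lin.stab}.2 (whose hypothesis $\det(A)\neq0$ is met here precisely because $c<0$) then upgrades the presence of a root in the open right half-plane to instability. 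This conclusion holds uniformly in $q\in(0,1)$, which is exactly the ``regardless of the fractional order $q$'' claim.

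I do not anticipate any genuine difficulty: the argument is just reading off the sign of $I'_\infty$ on the middle branch and citing the instability case of the earlier results. The only points demanding a little care are verifying that the hypotheses of the quoted statements are actually satisfied---that $b>0$, ensured by positivity of $\phi$ and $\ell$, and that $c\neq0$, which is automatic from $c<0$ and is what Theorem \ref{thm.lin.stab}.2 needs. Finally, to pass from instability of the linearization to instability of the equilibrium of the full nonlinear system (\ref{sistem4}), I would appeal to the stability criterion stated at the start of this subsection (equilibrium asymptotic stability is equivalent to all roots of (\ref{ec.car}) lying in the left half-plane), which itself rests on a fractional-order linearization theorem of Hartman--Grobman type as cited in the introduction.
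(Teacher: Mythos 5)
Your proposal is correct and follows essentially the same route as the paper: observe that $v^\star=v_2(I)\in(v_\alpha,v_\beta)$ forces $I'_\infty(v^\star)<0$, hence $c(v^\star)<0$ while $b(v^\star)=\phi\ell(v^\star)>0$, and then invoke the $c<0$ instability case (Proposition \ref{prop122}.1 together with Theorem \ref{thm.lin.stab}.2, i.e.\ Corollary \ref{cor.stab.lin}.1). Your extra care in checking that $\det(J)\neq 0$ and in flagging the linearization step is a welcome refinement of, but not a departure from, the paper's argument.
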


\begin{proof}
Let $I\in(I_{min},I_{max})$ and $v^\star=v_2(I)\in(v_\alpha,v_\beta)$. It follows that $I'_\infty(v^\star)<0$ and hence $c(v^\star)<0$. Based on Proposition \ref{prop122}, it follows that the characteristic equation \ref{ec.car} has at least one positive real root. Hence, the equilibrium point $(v^\star,n^\star)=(v_2(I),n_\infty(v_2(I)))$ is unstable, regardless of the fractional order $q$.\qed
\end{proof}

\begin{proposition}\label{prop.b13.stab}
If $(v^\star,n^\star)=(v^\star, n_\infty(v^\star))$ is an equilibrium point belonging to either the first or the third branch, such that $v^\star\notin\{v_\alpha,v_\beta\}$ and
$$a(v^\star)>-\min\{b(v^\star),1\},$$
then $(v^\star,n^\star)$ is asymptotically stable, regardless of the fractional order $q$.
\end{proposition}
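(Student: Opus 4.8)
The plan is to reduce the statement entirely to Proposition \ref{prop.indep.q}.1, which already handles $q$-independent left-half-plane root location for the auxiliary function $\Delta(s)=s^{q+1}+as+bs^q+c$. The characteristic equation (\ref{ec.car}) at the equilibrium $(v^\star,n^\star)$ is exactly of this form, with $a=a(v^\star)$, $b=b(v^\star)$ and $c=c(v^\star)$, and the paper has already recorded the equivalence between asymptotic stability of the equilibrium and all roots of (\ref{ec.car}) lying in the open left half-plane. Thus it suffices to verify that the hypotheses of Proposition \ref{prop.indep.q}.1 are satisfied, namely $b(v^\star)>0$, $c(v^\star)>0$, and $a(v^\star)>-\min\{b(v^\star),1\}$.

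First I would settle the signs of $b$ and $c$. We have $b(v^\star)=\phi\ell(v^\star)>0$ immediately, since $\phi>0$ and $\ell$ is a positive function. For $c(v^\star)=\phi\ell(v^\star)I'_\infty(v^\star)$, the sign is governed entirely by $I'_\infty(v^\star)$, and here I would use the geometry of the equilibrium branches: on the first branch $v^\star\in(-\infty,v_\alpha]$ and on the third branch $v^\star\in[v_\beta,\infty)$, and on both of these intervals $I_\infty$ is increasing, so $I'_\infty\geq 0$. By assumption (A3), the only zeros of $I'_\infty$ are $v_\alpha$ and $v_\beta$; since the hypothesis excludes $v^\star\in\{v_\alpha,v_\beta\}$, the point lies strictly inside the interval of increase, forcing $I'_\infty(v^\star)>0$ and hence $c(v^\star)>0$. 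This places us precisely in the case $c>0$ of Proposition \ref{prop122}.

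With $b(v^\star)>0$ and $c(v^\star)>0$ in hand, I would invoke the remaining hypothesis $a(v^\star)>-\min\{b(v^\star),1\}$ directly, which is exactly the condition appearing in Proposition \ref{prop.indep.q}.1. That proposition then guarantees that all roots of $\Delta(s)$, i.e.\ all roots of the characteristic equation (\ref{ec.car}), lie in the open left half-plane for every $q\in(0,1)$. By the asymptotic-stability criterion stated above for system (\ref{sistem4}), this yields asymptotic stability of $(v^\star,n^\star)$, and since the root location holds uniformly, the conclusion is independent of the fractional order $q$.

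There is essentially no deep obstacle here: the proposition is a repackaging of Proposition \ref{prop.indep.q}.1 together with the branch structure of the equilibria. The one point requiring care is the strict positivity of $c(v^\star)$, where the assumption $v^\star\notin\{v_\alpha,v_\beta\}$ is exactly what upgrades the weak monotonicity inequality $I'_\infty\geq 0$ to the strict inequality $I'_\infty(v^\star)>0$ needed to land in the regime $c>0$. Were that exclusion dropped, $c(v^\star)$ could vanish and the argument would fall outside the scope of Proposition \ref{prop.indep.q}.1.
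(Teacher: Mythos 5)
Your proposal is correct and follows essentially the same route as the paper: the paper likewise uses the monotonicity of $I_\infty$ on the first and third branches together with the exclusion $v^\star\notin\{v_\alpha,v_\beta\}$ to get $c(v^\star)>0$, and then applies point 2.(b) of Corollary \ref{cor.stab.lin}, which is exactly Proposition \ref{prop.indep.q}.1 combined with the left-half-plane stability criterion that you invoke directly. The only difference is that you unpack the corollary into its ingredients and spell out the sign check for $b(v^\star)$, which the paper leaves implicit.
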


\begin{proof}
If $(v^\star,n^\star)$ belongs to the first or third branch and $v^\star\notin\{v_\alpha,v_\beta\}$, we have $I'_\infty(v^\star)> 0$ which is equivalent to $c(v^\star)> 0$. Point 2.(b) of Corollary \ref{cor.stab.lin} implies that $(v^\star, n^\star)$ is asymptotically stable, regardless of the fractional order $q$.\qed
\end{proof}

\begin{remark}
We note that saddle-node bifurcations may take place in the generic system (\ref{sistem4}) if and only if $s=0$ is a root of the characteristic equation (\ref{ec.car}), which is equivalent to $c(v^\star)=0$, which in turn, means that $v^\star\in\{v_\alpha,v_\beta\}$. This obviously corresponds to the collision of two branches of equilibrium states (first two branches at $v_\alpha$ and last two branches at $v_\beta$, respectively).
\end{remark}

It is important to emphasize that Propositions \ref{prop.b2.unstable} and \ref{prop.b13.stab} have been obtained for the whole class of generic  fractional-order conductance-based neuronal models (\ref{sistem4}). In what follows, we restrict ourselves to the Morris-Lecar model (\ref{sistem3}). Additional information on the three branches of equilibrium states is given in Fig. \ref{fig.ramuri}

\begin{corollary}\label{prop.b13.ML} For the particular case of the Morris-Lecar model (\ref{sistem3}), with the function $I(v,n)$ given by (\ref{eq.I.ML}), assuming that:
$$v_K<v_\alpha<v_\beta<1,$$
we have:
\begin{enumerate}
\item Any equilibrium state $(v^\star, n^\star)$ of system (\ref{sistem3}) belonging to the first branch, with $v^\star\le v_{K}$, is asymptotically stable, regardless of the fractional order $q$.
\item Any equilibrium state $(v^\star, n^\star)$ of system (\ref{sistem3}) belonging to the third branch, with $v^\star\ge 1$, is asymptotically stable, regardless of the fractional order $q$.
\item Any equilibrium state $(v^\star, n^\star)$ of system (\ref{sistem3}) belonging to the second branch is unstable, regardless of the fractional order $q$.
\end{enumerate}
\end{corollary}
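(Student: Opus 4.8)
The plan is to dispose of the three claims by reducing each of them to the general stability and instability results already established for system (\ref{sistem4}), namely Propositions \ref{prop.b2.unstable} and \ref{prop.b13.stab}, and then examining the sign of the coefficient $a(v^\star)=I_v(v^\star,n_\infty(v^\star))$ on each of the two outer branches. Claim 3 is immediate: any equilibrium on the second branch has $v^\star\in(v_\alpha,v_\beta)$, so $I'_\infty(v^\star)<0$ and hence $c(v^\star)=\phi\ell(v^\star)I'_\infty(v^\star)<0$; instability regardless of $q$ then follows at once from Proposition \ref{prop.b2.unstable}.

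For Claims 1 and 2 the strategy is to verify the hypothesis $a(v^\star)>-\min\{b(v^\star),1\}$ of Proposition \ref{prop.b13.stab}. Since $b(v^\star)=\phi\ell(v^\star)>0$ forces $-\min\{b(v^\star),1\}<0$, it suffices to show $a(v^\star)>0$ in each case (the auxiliary requirement $v^\star\notin\{v_\alpha,v_\beta\}$ being automatic, as the branch conditions $v^\star\le v_K<v_\alpha$ and $v^\star\ge 1>v_\beta$ keep $v^\star$ strictly away from both roots). I would first compute, from (\ref{eq.I.ML}),
\begin{equation*}
a(v^\star)=\gamma_{Ca}m'_\infty(v^\star)(v^\star-1)+\gamma_{Ca}m_\infty(v^\star)+\gamma_Kn_\infty(v^\star)+\gamma_L,
\end{equation*}
together with $I_n(v^\star,n_\infty(v^\star))=\gamma_K(v^\star-v_K)$. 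For the third branch (Claim 2), where $v^\star\ge 1$, every summand above is nonnegative: the first because $m'_\infty>0$ and $v^\star-1\ge 0$, and the remaining three because $\gamma_{Ca},\gamma_K,\gamma_L>0$ and $m_\infty,n_\infty\in(0,1)$. Hence $a(v^\star)>0$ term by term, and Proposition \ref{prop.b13.stab} delivers asymptotic stability for every $q$.

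The first branch (Claim 1) is the delicate case, and this is where I expect the main obstacle. Since $v^\star\le v_K<1$, the leading summand $\gamma_{Ca}m'_\infty(v^\star)(v^\star-1)$ is now \emph{negative}, so the naive term-by-term argument breaks down. The remedy is to exploit the identity $I'_\infty(v)=I_v(v,n_\infty(v))+n'_\infty(v)I_n(v,n_\infty(v))$, which rearranges to
\begin{equation*}
a(v^\star)=I'_\infty(v^\star)-\gamma_K(v^\star-v_K)\,n'_\infty(v^\star).
\end{equation*}
On the first branch one has $I'_\infty(v^\star)>0$, while $n'_\infty(v^\star)>0$ and the hypothesis $v^\star\le v_K$ makes $v^\star-v_K\le 0$; the subtracted term is therefore nonpositive and $a(v^\star)\ge I'_\infty(v^\star)>0$. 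Proposition \ref{prop.b13.stab} then yields asymptotic stability independently of $q$. The crux of the whole argument is thus the recognition that the sign of $I_n=\gamma_K(v^\star-v_K)$ — controlled precisely by the standing assumption $v_K<v_\alpha$ together with the branch restriction $v^\star\le v_K$ — is what rescues the first branch, whereas the third branch is settled by the direct positivity afforded by $v^\star\ge 1$.
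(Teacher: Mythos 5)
Your proposal is correct and follows essentially the same route as the paper: the second branch via Proposition \ref{prop.b2.unstable} through $c(v^\star)<0$, the third branch by term-by-term nonnegativity of $I_v$ when $v^\star\ge 1$, and the first branch by the identity $a(v^\star)=I'_\infty(v^\star)+\gamma_K(v_K-v^\star)n'_\infty(v^\star)$ combined with Proposition \ref{prop.b13.stab}. The only (welcome) addition is that you make explicit the step $a(v^\star)\ge 0>-\min\{b(v^\star),1\}$, which the paper leaves implicit.
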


\begin{proof}
Let $(v^\star, n^\star)=(v^\star,n_\infty(v^\star))$ be an equilibrium state of system (\ref{sistem3}) belonging to the third branch, with $v^\star\ge 1$. As $I(v,n)$ is given by (\ref{eq.I.ML}), we have:
$$
I_v(v,n)=g_{Ca}[m'_{\infty}(v)(v-1)+m_{\infty}(v)]+g_k\cdot n+g_L
$$
and hence, we obtain:
\begin{align*}
a(v^\star)&=I_v(v^\star,n^\star)=\\
&=g_{Ca}[m'_{\infty}(v)(v^\star-1)+m_{\infty}(v^\star)]+g_k\cdot n_\infty(v^\star)+g_L\geq 0.
\end{align*}
Based on Proposition \ref{prop.b13.stab}, it follows that $(v^\star, n^\star)$ is asymptotically stable.

On the other hand, if $(v^\star, n^\star)=(v^\star,n_\infty(v^\star))$ is an equilibrium state of system (\ref{sistem3}) belonging to the first branch, with $v^\star\le v_{K}$, we first compute:
$$I'_{\infty}(v)=\frac{\mathrm{d}}{\mathrm{d}v}I(v,n_\infty(v))=I_v(v,n_\infty(v))+n_\infty'(v)\cdot I_n(v,n_\infty(v)),$$
and therefore:
$$
a(v^\star)=I_v(v^\star,n_\infty(v^\star))=n_\infty'(v^\star)\cdot g_k\cdot (v_K-v^\star)+I'_{\infty}(v^\star)\geq 0,
$$
due to the fact that $n_\infty$ is increasing on the whole real line and $I_\infty$ is increasing on $(-\infty,v_\alpha]$. Hence, based on Proposition \ref{prop.b13.stab}, it follows that $(v^\star, n^\star)$ is asymptotically stable.

The last part of the Proposition follows directly from Proposition \ref{prop.b2.unstable}.\qed
\end{proof}

\begin{remark}
In the following, we will discuss the stability of equilibrium states $(v^\star,n^\star)$ belonging to the first or third branch, with $v^\star\in(v_K,v_\alpha)$ or $v^\star\in(v_\beta,1)$, respectively.

Assume that $\phi$ is small (i.e. $\phi\ll 1$) and that $\ell(v)<\phi^{-1}$, for any $v\in(v_K,v_\alpha)\cup (v_\beta,1)$ (these are true in the case of numerical values considered in the literature). In this case, we have $b(v)=\phi\ell(v)<1$, for any $v\in(v_K,v_\alpha)\cup (v_\beta,1)$. Moreover, from the last part of the proof of Corollary \ref{prop.b13.ML}, we have
$$
a(v)=n_\infty'(v)\cdot g_k\cdot (v_K-v)+I'_{\infty}(v)
$$
and hence, we can easily see that $a(v_\alpha)<0$ and $a(v_\beta)<0$ (as $v_\alpha$ and $v_\beta$ are the roots of $I'_{\infty}$). On the other hand, from the proof of Corollary \ref{prop.b13.ML}, we know that $a(v_K)>0$ and $a(1)>0$, and therefore, the function $a(v)$ changes its sign on the intervals $(v_K,v_{\alpha})$ and $(v_{\beta},1)$, respectively. According to our assumption that $\phi$ is small, it follows that the function $a(v)+b(v)$ also changes its sign on each of the intervals $(v_K,v_{\alpha})$ and $(v_{\beta},1)$. Therefore, there exist two roots $v'\in(v_K,v_{\alpha}) $ and $v''\in(v_{\beta},1)$ of the function $a(v)+b(v)$. We will further assume that these roots are unique, which is in accordance with the numerical data. Based on Proposition \ref{prop.b13.stab}, we deduce that an equilibrium state $(v^\star,n^\star)$ belonging to the first branch or third branch with $v^\star<v'$ or $v^\star>v''$, respectively is asymptotically stable, regardless of the fractional order $q$ (see Fig. \ref{fig.ramuri}).

The stability of an equilibrium state $(v^\star,n^\star)$ belonging to the first branch with $v^\star\in[v',v_\alpha)$ depends on the fractional order $q$. Indeed, according to \ref{cor.stab.lin}, $(v^\star,n^\star)$ is $\mathcal{O}(t^{-q})$-asymptotically stable if and only if
$$a(v^\star)>a^\star(b(v^\star),c(v^\star),q),$$
where the function $a^\star$ is defined by (\ref{eq.a.star}) (see Fig \ref{fig.B1.q}).

At the critical value $q^\star$ defined implicitly by the equality
$$a(v^\star)=a^\star(b(v^\star),c(v^\star),q^\star),$$
a Hopf bifurcation is expected to occur, as it can be deduced from Proposition \ref{prop122}, points 3.(b,c). We emphasize that even though bifurcation theory in integer-order dynamical systems has been widely and rigorously studied (see for example
\cite{Kuznetsov}), at this time, in the case of fractional-order systems, very few theoretical results are known regarding bifurcation
phenomena. In \cite{ElSaka}, some conditions for the occurrence of Hopf bifurcations have been formulated, based on observations arising from
numerical simulations. Moreover, a center manifold theorem has been recently obtained in \cite{MaLi2016center}. However, the complete theoretical characterization of the Hopf bifurcation in fractional-order systems are still open questions. This is the reason why we rely on numerical simulations to assess the qualitative behavior of fractional order systems near a Hopf bifurcation point, as well as the stability of the resulting limit cycle. Bifurcations in the classical integer-order Morris-Lecar neuronal model are well-understood and have been thoroughly investigated in \cite{Tsumoto2006bifurcations}.

On the third branch, when analyzing the stability of an equilibrium state $(v^\star,n^\star)$ with $v^\star\in(v_\beta,v'']$, according to the numerical data, two situations may occur. Let us denote by $v'''\in(v_\beta,v'')$ the root of the equation
$$a(v)+b(v)+c(v)+1=0.$$
If $v^\star\in (v_\beta,v''']$, we have $a(v^\star)+b(v^\star)<0$ and $a(v^\star)+b(v^\star)+c(v^\star)+1\leq 0$, and therefore, from Corollary \ref{cor.stab.lin} point 2.(d), we obtain that $(v^\star,n^\star)$ is unstable, regardless of the fractional order $q$.

If $v^\star\in (v''',v'']$, the equilibrium state $(v^\star,n^\star)$ is $\mathcal{O}(t^{-q})$-asymptotically stable if and only if
$$a(v^\star)>a^\star(b(v^\star),c(v^\star),q),$$
where the function $a^\star$ is defined by (\ref{eq.a.star}) (see Fig \ref{fig.B3.q}).  As in the case of the first branch, at the critical value defined $q^\star$ defined implicitly by the equality
$$a(v^\star)=a^\star(b(v^\star),c(v^\star),q^\star),$$
a Hopf bifurcation is expected to occur.
\end{remark}

\begin{figure*}[htbp]
	\centering
	\includegraphics*[width=0.8\linewidth]{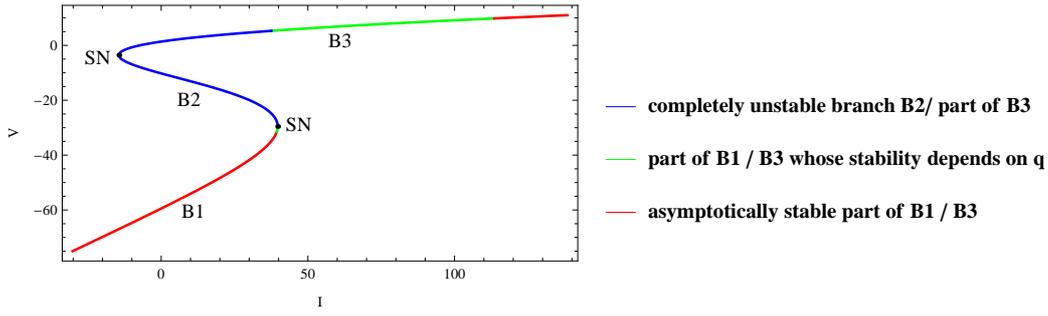}
\caption{Branches of equilibrium states for the Morris-Lecar model (\ref{sistem1}), with the parameter values given by Table \ref{tab1}. Here, $V'=-31.403$, $V_\alpha=-29.568$, $V_\beta=-3.5774$, $V'''=5.28457$ and $V''=9.82288$. The three branches coexist if and only if $I\in(-14.4204,39.6935)$.}
	\label{fig.ramuri}
\end{figure*}

\begin{figure}[htbp]
	\centering
	\includegraphics*[width=0.9\linewidth]{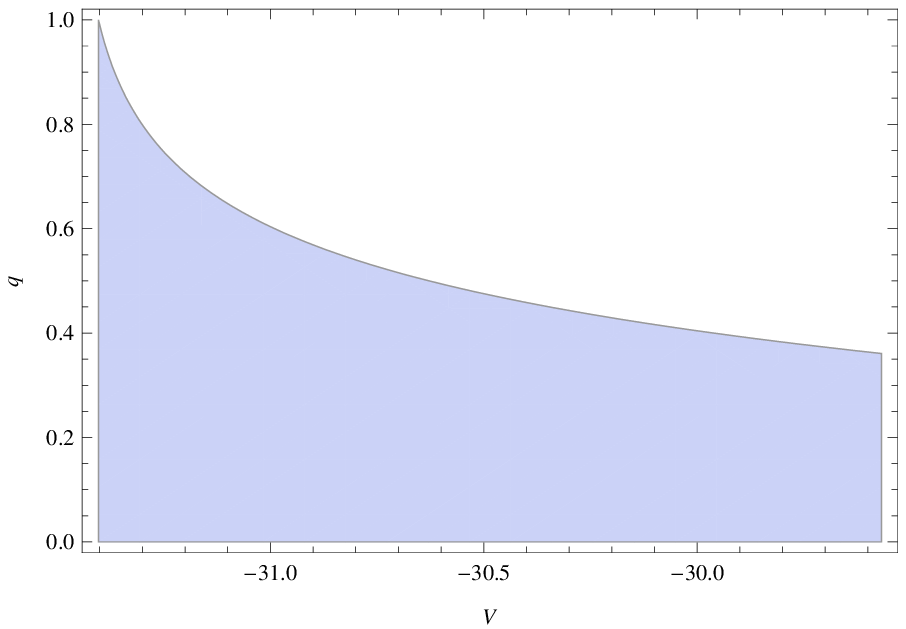}
	\caption{Stability of equilibrium states $(V^\star,N^\star)$ from branch B1 of the Morris-Lecar model (\ref{sistem1}) (with $p=1$) with $V^\star\in[V',V_\alpha)=[-31.403,-29.568)$ depends on the fractional order $q$. The blue curve represents the critical values of $q$ for which Hopf bifurcations may occur in a neighborhood of the corresponding equilibrium state $(V^\star,N^\star)$. The asymptotic stability region is the shaded area below the curve.}
	\label{fig.B1.q}
\end{figure}

\begin{figure}[htbp]
	\centering
	\includegraphics*[width=0.9\linewidth]{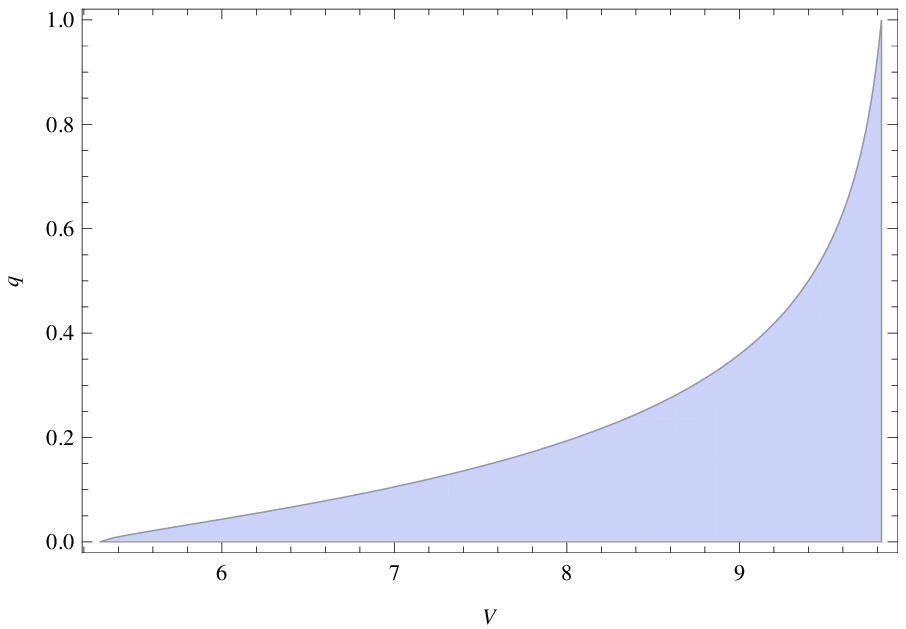}
	\caption{Stability of equilibrium states $(V^\star,N^\star)$ from branch B3 of the Morris-Lecar model (\ref{sistem1}) (with $p=1$) with $V^\star\in(V''',V'']=(5.28457,9.82288]$ depends on the fractional order $q$. The blue curve represents the critical values of $q$ for which Hopf bifurcations may occur in a neighborhood of the corresponding equilibrium state $(V^\star,N^\star)$. The asymptotic stability region is the shaded area below the curve.}
	\label{fig.B3.q}
\end{figure}

\subsection{Further numerical simulations}

In the numerical simulations, we use the numerical values given in Table \ref{tab1} for the parameters of system (\ref{sistem1}), corresponding to a type-I neuron \cite{Morris_Lecar_1981,Tsumoto2006bifurcations}.

\begin{table*}[htbp]
\caption{Numerical values and significance of the parameters used in the simulations.}
\label{tab1}
	\begin{center}
		\begin{tabular}{|c|c|c|l|}
			\hline
			\textbf{Parameter}&\textbf{Value}&\textbf{Unit}&\textbf{Significance}\\
			\hline
			$g_L$&$2$&$mmho/cm^2$&maximum or instantaneous values for leak pathways\\
			\hline
			$g_{Ca}$&$4$&$mmho/cm^2$&maximum or instantaneous values for $Ca^{++}$ pathways\\
			\hline
			$g_K$&$8$&$mmho/cm^2$&maximum or instantaneous values for $K^+$ pathways\\
			\hline
			$V_K$&$-80$&$mV$&equilibrium potential corresponding to $K^+$ conductances\\
			\hline
			$V_L$&$-60$&$mV$&equilibrium potential corresponding to leak conductances\\
			\hline
			$V_{Ca}$&$120$&$mV$&equilibrium potential corresponding to $Ca^{++}$ conductances\\
			\hline
			$V_1$&$-1.2$&$mV$&potential at which $M_{\infty}=0.5$\\
			\hline
			$V_2$&$18$&$mV$&reciprocal of slope of voltage dependence of $M_{\infty}$\\
			\hline
			$V_3$&$12$&$mV$&potential at which $N_{\infty}=0.5$\\
			\hline
			$V_4$&17.4&$mV$&reciprocal of slope of voltage dependence of $N_{\infty}$\\
			\hline
            $R_m$ &250& $\Omega\cdot cm^2$ & membrane resistance \\
            \hline
            $\tau$ &5& ms & time constant \\
            \hline
           $\overline{\lambda_N}$ & $1/15$ & s$^{-1}$ & maximum rate constant for the $K^+$ channel opening\\
           \hline
		\end{tabular}
	\end{center}
\end{table*}

Interesting spiking behavior can be observed by numerical simulations for the externally applied current of $I=40$ ($\mu A$) and different values of the fractional order $q$ (see Figs. \ref{fig.limit.cycles.q} and \ref{fig.spikes.q}). At $I=I_{max}=39.6935$, the first two branches of equilibrium states collide, at the saddle-node bifurcation point with abscissa $V_\alpha=-29.568$, and disappear for $I>I_{max}$.  When $I$ crosses the value $I_{max}$, the corresponding equilibrium states of branch B3 (with the abscissa slightly larger than $V'''$) are unstable for most values of the fractional order $q$ and asymptotically stable only for very small values of $q$ (which may be unrealistic from biologic point of view), as shown in Fig. \ref{fig.B3.q}. However, for $q$ large enough, a stable limit cycle exists in a neighborhood of each equilibrium state of branch B3 corresponding to $I$ slightly larger than $I_{max}$ (see green part of B3 in Fig. \ref{fig.ramuri}). Fig. \ref{fig.spikes.q} shows that for the same value of the externally applied current $I=40$ ($\mu A$), as the fractional order $q$ of the system decreases, the number of spikes over the same time interval increases, which may correspond to a better reflection of the biological properties by the fractional order model.

\begin{figure}[htbp]
	\centering
	\includegraphics*[width=\linewidth]{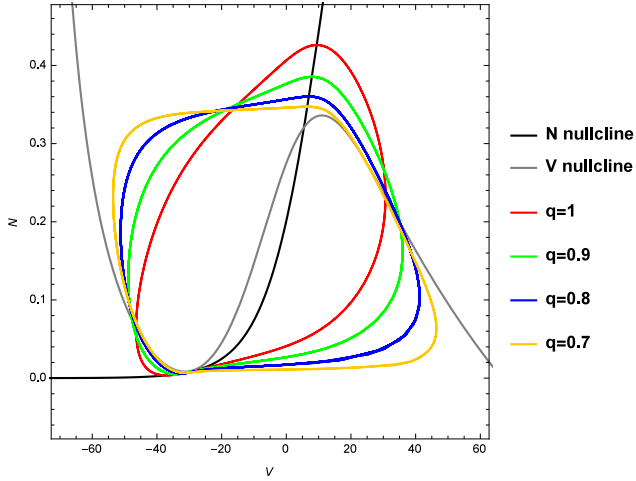}
\caption{Limit cycles for the fractional-order Morris-Lecar model (\ref{sistem1}) with various values of the fractional order $q$, when $I=40$.}
	\label{fig.limit.cycles.q}
\end{figure}

\begin{figure}[htbp]
	\centering
	\includegraphics*[width=\linewidth]{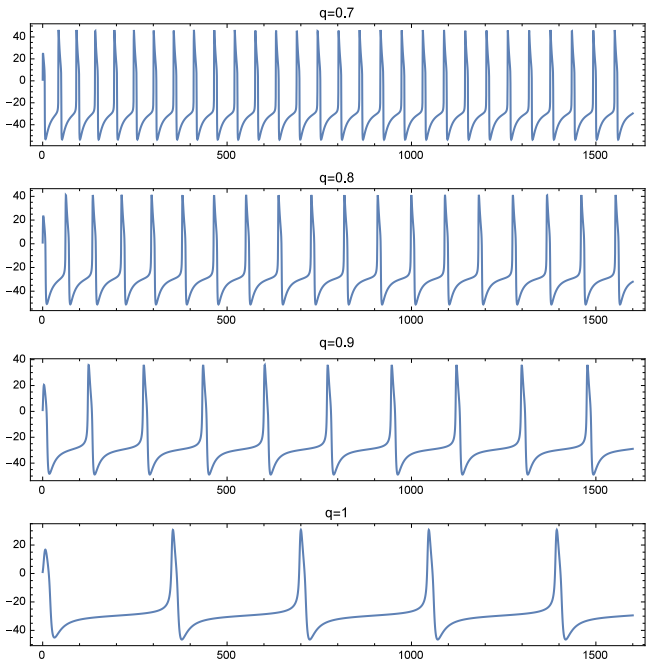}
	\caption{Evolution of $V$ with respect to time for the fractional-order Morris-Lecar model (\ref{sistem1}) with various values of the fractional order $q$, when $I=40$. The considered initial condition is the resting state.}
	\label{fig.spikes.q}
\end{figure}

\section{Conclusions}
In this paper, we have obtained necessary and sufficient conditions for the asymptotic stability of a two-dimensional incommensurate order linear autonomous system with one frac\-tional-order derivative and one first-order derivative. These theoretical results have been successfully applied to the investigation of the equilibrium states of a fractional-order Morris-Lecar neuronal model.

The extension of the methods presented in the first part of the paper to more complicated (higher dimensional) incommensurate order linear fractional order systems represents a direction for future research, possibly leading to more extensive generalizations of the classical Routh-Hurwitz stability conditions. A potential application of such results concerns the analysis of neuronal networks composed of several neurons of Morris-Lecar type.

\appendix

\section{Proof of Proposition \ref{prop_aq}.}
We compute:
	\begin{align*}
		^c\!D^qg(x)&=\frac{1}{\Gamma(-q)}\int\limits_{0}^{x}(x-t)^{-q-1}[g(t)-g(0)]dt\\
		&=\frac{1}{\Gamma(-q)}\int\limits_{0}^{x}(x-t)^{-q-1}[f(at)-f(0)]dt\\
		&=\frac{1}{a\Gamma(-q)}\int\limits_{0}^{ax}\Big(x-\frac{s}{a}\Big)^{-q-1}[f(s)-f(0)]dt\\
		&=\frac{1}{a\Gamma(-q)}\int\limits_{0}^{ax}a^{q+1}(ax-s)^{-q-1}[f(s)-f(0)]dt\\
		&=\frac{a^{q}}{\Gamma(-q)}\int\limits_{0}^{ax}(ax-s)^{-q-1}[f(s)-f(0)]dt\\
		&=a^q (^c\!D^qf)(ax)
	\end{align*}
It follows that:
	$$^c\!D^q g(x)=a^q\cdot ^c\!\!D^qf(ax),\quad\textrm{for any }a\neq 0,$$
which completes the proof.\qed

\section{Deduction of the nondimensional system (\ref{sistem3})}

Starting from system (\ref{sistem1}), we consider the substitutions
$$v(t)=kV(\alpha t)\quad,\quad n(t)=N(\alpha t),$$
where $\alpha$ and $k$ will be deduced in the following.

Applying Proposition \ref{prop122}, we have:
\begin{align*}
^c\!D^qv(t)=&k\cdot^c\!D^q[V(\alpha t)]\\
=&k\alpha^q(^c\!D^qV)(\alpha t)\\
=&k\alpha^q\frac{1}{C_m(q)}\Big[g_{Ca}M_\infty(V(\alpha t))(V_{Ca}-V(\alpha t))+\\
&+g_KN(\alpha t)(V_K-V(\alpha t))+g_L(V_L-V(\alpha t))+I\Big]\\
=&k\alpha^q\frac{1}{C_m(q)}\Big[g_{Ca}M_\infty\Big(\frac{v(t)}{k}\Big)\Big(V_{Ca}-\frac{v(t)}{k}\Big)+\\
&+g_K n(t)\Big(V_K-\frac{v(t)}{k}\Big)+g_L\Big(V_L-\frac{v(t)}{k}\Big)+I\Big]\\
=&\frac{\alpha^q}{C_m(q)}\Big[g_{Ca}m_\infty(v)(kV_{Ca}-v)+\\
&+g_Kn(kV_K-v)+g_L(kV_L-v)+kI\Big]
\end{align*}
and therefore, it makes sense to choose $k=\frac{1}{V_{Ca}}$.

Furthermore, with the notations from section 3, we obtain:
\begin{align*}
^c\!D^qv(t)=R_m\Big(\frac{\alpha}{\tau}\Big)^q\Big[&g_{Ca}m_\infty(v)(1-v)+g_K n(v_K-v)+\\
&+g_L(v_L-v)+\tilde{I}.
\end{align*}
At this step, it is easy to see that it makes sense to consider $\alpha=\tau$, which leads to:
\begin{align*}
^c\!D^qv(t)&=R_m\Big[g_{Ca}m_\infty(v)(1-v)+g_K n(v_K-v)+g_L(v_L-v)+\frac{I}{V_{Ca}}\Big]\\
&=\gamma_{Ca}m_\infty(v)(1-v)+\gamma_K\cdot n(v_K-v)+\gamma_L(v_L-v)+\tilde{I}.
\end{align*}
As for the second equation, applying Proposition \ref{prop122}, and taking into account that $\alpha=\tau$, we obtain:
\begin{align*}
^c\!D^pn(t)&=\alpha^p(^c\!D^p)(\alpha t)\\
&=(\alpha\overline{\lambda_N})^p\lambda(V(\alpha t))[N_\infty(V(\alpha t))-N(\alpha t)]\\
&=(\tau\overline{\lambda_N})^p\lambda\Big(\frac{v(t)}{k}\Big)\Big[N_\infty\Big(\frac{v(t)}{k}\Big)-n(t)\Big]\\
&=(\tau\overline{\lambda_N})^p\cdot\ell(v)[n_\infty(v)-n],
\end{align*}
Therefore, the nondimensional system (\ref{sistem3}) is found.

\bibliographystyle{spmpsci}

\end{document}